\theoremstyle{thmstyleone}%
\newtheorem{theorem}{Theorem}[section]% meant for sectionwise numbers
\newtheorem{prop}[theorem]{Proposition}% 
\theoremstyle{thmstyletwo}%
\newtheorem{exm}[theorem]{Example}
\newtheorem{rem}[theorem]{Remark}
\theoremstyle{thmstylethree}%
\newtheorem{lem}[theorem]{Lemma}
\numberwithin{equation}{section}
\newcommand{\Sm}{\textrm{Sm}_\Theta(I)}
\newcommand{\D}{\textrm{D}(I)}
\begin{document}

\title[A Lanczos-like method for linear ODEs]{A Lanczos-like method for non-autonomous linear ordinary differential equations}

%%=============================================================%%
%% Prefix	-> \pfx{Dr}
%% GivenName	-> \fnm{Joergen W.}
%% Particle	-> \spfx{van der} -> surname prefix
%% FamilyName	-> \sur{Ploeg}
%% Suffix	-> \sfx{IV}
%% NatureName	-> \tanm{Poet Laureate} -> Title after name
%% Degrees	-> \dgr{MSc, PhD}
%% \author*[1,2]{\pfx{Dr} \fnm{Joergen W.} \spfx{van der} \sur{Ploeg} \sfx{IV} \tanm{Poet Laureate} 
%%                 \dgr{MSc, PhD}}\email{iauthor@gmail.com}
%%=============================================================%%

\author[1]{\fnm{Pierre-Louis} \sur{Giscard}}\email{giscard@univ-littoral.fr}
\equalcont{These authors contributed equally to this work.}

\author*[2,3]{\fnm{Stefano} \sur{Pozza}}\email{pozza@karlin.mff.cuni.cz}
\equalcont{These authors contributed equally to this work.}

\affil[1]{\orgdiv{\orgname{Universit\'e du Littoral C\^{o}te d’Opale}, EA2597-LMPA-Laboratoire de Math\'ematiques Pures et Appliqu\'ees Joseph Liouville}, \orgaddress{\city{Calais}, \country{France}}}

\affil*[2]{\orgname{Charles University}, \orgdiv{Faculty of Mathematics and Physics}, \orgaddress{\city{Prague 8}, \country{Czech Republic}}}

\affil[3]{\orgdiv{Associated member of ISTI}, \orgname{CNR}, \orgaddress{\city{Pisa}, \country{Italy}}, and
member of INdAM-GNCS group, Italy}

%%==================================%%
%% sample for unstructured abstract %%
%%==================================%%

\abstract{The time-ordered exponential is defined as the function that solves a system of coupled first-order linear differential equations with generally non-constant coefficients. In spite of being at the heart of much system dynamics, control theory, and model reduction problems, the time-ordered exponential function remains elusively difficult to evaluate. The $\ast$-Lanczos algorithm is a (symbolic) algorithm capable of evaluating it by producing a tridiagonalization of the original differential system. In this paper, we explain how the $\ast$-Lanczos algorithm is built from a generalization of Krylov subspaces, and we prove crucial properties, such as the \emph{matching moment property}. 
A strategy for its numerical implementation is also outlined and will be subject of future investigation.}

\keywords{Lanczos algorithm, Matrix differential equations, Time-ordered exponential, Matching moments, Tridiagonal matrices, Ordinary differential equations}

%%\pacs[JEL Classification]{D8, H51}

%%\pacs[MSC Classification]{35A01, 65L10, 65L12, 65L20, 65L70}

\maketitle

\section{Introduction}
Let $t'\geq t\in I\subseteq \mathbb{R}$ be variables--called times for convenience--in an interval $I$, and $\mathsf{A}(t')$ be an $N\times N$ time-dependent matrix. For a fixed $t$ (usually $t=0$), the time-ordered exponential of $\mathsf{A}(t')$ is defined as the unique solution $\mathsf{U}(t',t)$ of the non autonomous system of linear ordinary differential equations
\begin{equation}\label{FundamentalSystem}
\mathsf{A}(t') \mathsf{U}(t',t)=\frac{d}{dt'}\mathsf{U}(t',t), \quad \mathsf{U}(t,t)=\mathsf{Id}, \quad t' \geq t,
\end{equation}
with $\mathsf{Id}$ the identity matrix.
 Note that $t$ represents the time on which the initial condition is given, and that the (unusual) notation $\mathsf{U}(t',t)$ will be useful later.
If the matrix $\mathsf{A}$ commutes with itself at all times, i.e., $\mathsf{A}(\tau_1)\mathsf{A}(\tau_2)-\mathsf{A}(\tau_2)\mathsf{A}(\tau_1)=0$ for all $\tau_1,\tau_2 \in I$, then the time-ordered exponential is given by the matrix exponential $\mathsf{U}(t',t)=\exp\left(\int_t^{t'} \mathsf{A}(\tau)\, \textrm{d}\tau\right).$
However, when $\mathsf{A}$ does not commute with itself at all times, the time-ordered exponential has no known explicit form in terms of $\mathsf{A}$ and is rather denoted 
$$
\mathsf{U}(t',t)=\mathcal{T}\exp\left(\int_t^{t'} \mathsf{A}(\tau)\, \textrm{d}\tau\right),
$$ 
with $\mathcal{T}$ the time-ordering operator \cite{dyson1952}. This expression, introduced by Dyson in 1952, is more a notation than an explicit form as the action of the time-ordering operator is very difficult to evaluate. In particular, $\mathsf{U}(t',t)$ does not have a Cauchy integral representation, and it cannot be evaluated via ordinary diagonalization.
It is unlikely that a closed form expression for $\mathsf{U}(t',t)$ in terms of $\mathsf{A}$ exists at all since even when $\mathsf{A}$ is $2\times 2$, $\mathsf{U}$ can involve very complicated special functions \cite{Xie2010,Hortacsu2018}.

Evaluating time-ordered exponentials is a central question in the field of system dynamics, in particular in quantum physics where $\mathsf{A}$ is the Hamiltonian operator. Situations where this operator does not commute with itself are routinely encountered \cite{Blanes2009}, and the departure of the time-ordered exponential from a straightforward matrix exponential is responsible for many peculiar physical effects  \cite{Autler1955,Shirley1965,Lauder1986}. 
Further applications are found via
differential Lyapunov and Riccati matrix equations, which frequently appear in control theory, filter design, and model reduction problems \cite{Reid63,kwaSiv72,Corless2003,Blanes15,BenEtAll17}. Indeed, the solutions of such differential equations involve time-ordered exponentials \cite{Kuvcera73,Abou2003,hached2018,KirSim19}.\\[-.5em]

{In \cite{GisPoz21}, we introduced a tridiagonal form for the matrix $\mathsf{A}(t')$ from which it is possible to express a time-ordered exponential via path-sum continued fractions of finite depth. More precisely, the described procedure formulates each element of a time-ordered exponential in terms of a finite and treatable number of scalar integro-differential equations.  
% This approach generates controllable sequences of approximations, offers an innovative perspective of the connection between numerical linear algebra and differential calculus, and opens the door to efficient numerical algorithms for large scale computations.
Such a tridiagonal form is obtained by using the so-called \emph{$\ast$-Lanczos algorithm}. The algorithm also appeared in \cite{GisPozInv19} as the paper's motivation. Despite being at the core of results in \cite{GisPoz21} and being the motivation of \cite{GisPozInv19}, the $\ast$-Lanczos algorithm construction and the proofs of related main properties have yet not appeared in a scientific journal. The present paper aims to solve such a gap in the literature as it constructs the $\ast$-Lanczos algorithm from a (generalized) Krylov subspace perspective, proves the related \emph{Matching Moment Property}, introduces a bound for the approximation error, and adds further results on the breakdown issue that may affect the method.}

\subsection{Existing analytical approaches: Pitfalls and drawbacks}
In spite of the paramount importance of the time-ordered exponential, it is usually omitted from the literature on matrix functions.
% and suffers from a scarcity of methods capable of approximating it. 
Until 2015, only two families of analytical approaches existed (numerical methods will be discussed in Section~\ref{SecNumerical}). The first one to have been devised relies on Floquet theory and necessitates $\mathsf{A}(t')$ to be periodic (see, e.g., \cite{Blanes2009}). This method transforms Eq.~(\ref{FundamentalSystem}) into an \emph{infinite} system of coupled linear differential equations with \emph{constant} coefficients. This system is then solved perturbatively at very low order, as orders higher than 2 or 3 are typically too involved to be treated. The second method was developed in 1954 by Wilhelm Magnus \cite{Magnus1954}. It produces an infinite series of nested commutators of $\mathsf{A}$ with itself at different times, the ordinary matrix exponential of which provides the desired solution $\mathsf{U}(t',t)$. 
Magnus series are very much in use nowadays \cite{Blanes2009}, especially because they guarantee that the approximation to $\mathsf{U}(t',t)$ is unitary in quantum mechanical calculations \cite{Blanes2009}.
Nevertheless, the Magnus series for $\mathsf{U}(t',t)$ has a small (even if not so restrictively) convergence domain; see \cite{Casas07} and also \cite{Feldman1984,Maricq1987,IseAl2000,MoaNie07,Sanchez2011}.

In 2015, P.-L. G. et al. proposed a third method to obtain time-ordered exponentials using graph theory and necessitating only the entries $\mathsf{A}(t')_{k\ell}$ to be bounded functions of time \cite{Giscard2015}. The method formulates any desired entry or group of entries of $\mathsf{U}(t',t)$ as a branched continued fraction of \emph{finite} depth and breadth.
It has been succesfully used to solve challenging quantum dynamic problems, see e.g. \cite{Balasubramanian2020,BonGis2020}. This approach is unconditionally convergent and it provides exact expressions in terms of a finite number of integrals and Volterra equations. However, it suffers from a complexity drawback. 
Indeed, it requires one to find all the simple cycles and simple paths of a certain graph $G$. These are the walks on $G$ which are not self-intersecting. Unfortunately, the problem of enumerating such walks is $\#$P-complete \cite{Flum2004}, hindering the determination of exact solutions in large systems that must be treated using a further property of analytical path-sums called scale-invariance \cite{BonGis2020}. The present work with the results in \cite{GisPozInv19,GisPoz21} solves this issue by transforming the original matrix onto a structurally simpler one on which the path-sum solution takes the form of an ordinary, finite, continued fraction.

\subsection{The non-Hermitian Lanczos algorithm: Background}
Consider the simpler case in which $\mathsf{A}$ is not time-dependent. The solution of \eqref{FundamentalSystem} is given by the matrix function $\exp(\mathsf{A}(t'-t))$ which can be numerically approximated in several different ways (see, e.g., \cite{MolVLo78,MolVLo03,HigBook08}). One possible method is the (non-Hermitian) Lanczos algorithm. Computing the $(k,\ell)$ element of $\exp(\mathsf{A})$ is equivalent to computing the bilinear form $\boldsymbol{e}_k^H\exp(\mathsf{A}) \,\boldsymbol{e}_\ell$,
with $\boldsymbol{e}_k, \boldsymbol{e}_\ell$ vectors from the canonical Euclidean basis, and $\boldsymbol{e}_k^H$ the usual Hermitian transpose (here it coincides with the transpose since the vector is real). The non-Hermitian Lanczos algorithm (e.g., \cite{Gut92,Gut94b,GolMeuBook10,LieStrBook13}) gives, when no breakdown occurs, the matrices  
$$ \mathsf{V}_n = [\boldsymbol{v}_0,\dots,\boldsymbol{v}_{n-1}], \quad 
   \mathsf{W}_n = [\boldsymbol{w}_0,\dots,\boldsymbol{w}_{n-1}],$$
whose columns are biorthonormal bases respectively for the Krylov subspaces 
$$\text{span}\{ \boldsymbol{e}_\ell,  \mathsf{A}\,\boldsymbol{e}_\ell, \dots, \mathsf{A}^{n-1}\, \boldsymbol{e}_\ell \}, 
\quad
\text{span}\{ \boldsymbol{e}_k,  \mathsf{A}^H\boldsymbol{e}_k, \dots, (\mathsf{A}^H)^{n-1}\, \boldsymbol{e}_k \}.$$
Note that for $\mathsf{A}$ Hermitian and $k=\ell$ we can equivalently use the Hermitian Lanczos algorithm (getting $\mathsf{V}_n = \mathsf{W}_n$).
The so-called (complex) \emph{Jacobi matrix} $\mathsf{J}_n$ is the tridiagonal symmetric matrix with generally complex elements obtained by
$$ \mathsf{J}_n = \mathsf{W}_n^H \mathsf{A} \,\mathsf{V}_n.$$
As described in \cite{GolMeuBook10}, we can use the approximation
\begin{equation}\label{eq:class:lancz:approx}
     \boldsymbol{e}_k^H\exp(\mathsf{A}) \boldsymbol{e}_\ell \approx \boldsymbol{e}_1^H\exp(\mathsf{J}_n) \boldsymbol{e}_1,
\end{equation}
which relies on the so-called \emph{matching moment property}, i.e., 
\begin{equation}\label{eq:classical:mmp}
    \boldsymbol{e}_k^H (\mathsf{A})^j \boldsymbol{e}_\ell = \boldsymbol{e}_1^H (\mathsf{J}_n)^j \boldsymbol{e}_1, \quad j=0,1,\dots, 2n-1;
\end{equation}
see, e.g., \cite{GolMeuBook10,LieStrBook13} for the Hermitian case, and \cite{PozPraStr16,PozPraStr18} for the non-Hermitian one.
 The approximation \eqref{eq:class:lancz:approx} is a model reduction in two senses. First, the size of $\mathsf{A}$ is much larger than $n$ -- the size of $\mathsf{J}_n$. Second, the structure of the matrix $\mathsf{J}_n$ is much simpler since it is tridiagonal. 
 %From a graph perspective, looking at $\mathsf{A}$ and $\mathsf{J}_n$ as adjacency matrices of, respectively, the graphs $G$ and $H_n$, the possibly very complicated structure of $G$ is reduced to the path (with self-loops) $H_n$. In this framework, Property \eqref{eq:classical:mmp} shows that the weighted number of walks in $G$ of length $j$ from the node $k$ to the node $\ell$ is equal to the weighted number of closed walks of length $j$ in $H_n$ passing through the node $1$, for $j=0,1,\dots, 2n-1$; see, e.g., \cite{EstRod05,BenBoi10}.
 
 Given a matrix $\mathsf{A}$ with size $N$, the Lanczos algorithm can be used as a method for its tridiagonalization (see, e.g., \cite{Par92}). Assuming no breakdown, the $N$th iteration of the non-Hermitian Lanczos with input the matrix $\mathsf{A}$ and a couple of vectors $\boldsymbol{v}, \boldsymbol{w}$ produces the tridiagonal matrix $\mathsf{J}_N$, and the biorthogonal square matrices $\mathsf{V}_N, \mathsf{W}_N$ so that
 \begin{equation}\label{eq:full:tridiag}
    \mathsf{A}^j = \mathsf{V}_N \, (\mathsf{J}_N)^j\,\mathsf{W}_N^H, \quad j=0,1,\dots ,
 \end{equation}
 giving the exact expression
 \begin{equation}\label{eq:exptridiag}
    \exp(\mathsf{A}) = \mathsf{V}_N \exp(\mathsf{J}_N)\, \mathsf{W}_N^H.
 \end{equation}
Theorem~\ref{thm:full:tridiag} which we prove in this work extends this result to time-ordered exponentials. 

The Lanczos approximation \eqref{eq:class:lancz:approx} is connected with several further topics, such as (formal) orthogonal polynomials, Gauss quadrature, continued fractions, the moment problem, and many others. Information about these connections and references to the related rich and vast literature can be found, e.g., in the monographs \cite{DraBook83,GolMeuBook10,LieStrBook13} and the surveys \cite{PozPraStr16,PozPraStr18}.
\bigskip

Inspired by approximation \eqref{eq:class:lancz:approx}, the $\ast$-Lanczos algorithm produces a model reduction of a time-ordered exponential by providing a time-dependent tridiagonal matrix $\mathsf{T}_n$ satisfying properties analogous to the ones described above \cite{GisPoz21}. Differently from the classical case, the $*$-Lanczos algorithm works on vector distribution subspaces and it has to deal with a non-commutative product.

The time-dependent framework in which the proposed method works is much more complicated than the (time-independent) Krylov subspace approximation given by the (classical) Lanczos algorithm. In this paper, we will not deal with the behavior of the $*$-Lanczos algorithm considering approximations and finite-precision arithmetic problems. 
% As it is well-known, rounding errors deeply affect the behavior of the (classical) Lanczos algorithm by loss of orthogonality of the computed Krylov subspace bases (see, e.g., \cite{LieStrBook13}). We expect to see an analogous behavior in any numerical implementation of the $\ast$-Lanczos algorithm. 
% Such an issue needs to be investigated further in order to rely on the method in a computational setting confidently, especially since the proposed algorithm relies on short-recurrences. We stress it again, the described $*$-Lanczos algorithm and the code presented later may not be computationally reliable due to rounding errors or inaccuracies given by needed approximations. Nevertheless, as discussed at the end of this work, a numerical implementation of the $\ast$-Lanczos algorithm is clearly within reach and is expected to be competitive, in particular when dealing with large sparse matrices, typically encountered in applications.

\subsection{Outline}
The work is organized as follows: In Section~\ref{AlgoSection}, we build the $\ast$-Lanczos algorithm. The algorithm relies on a non-commutative $\ast$-product between generalized functions of two-time variables, which we describe in Section~\ref{ProdMoment}. Then, in Section~\ref{BuildT}, we state the main result, Theorem~\ref{thm:mmp}, which underpins the Lanczos-like procedure. The Theorem establishes that the first $2n$ $\ast$-moments of a certain tridiagonal matrix $\mathsf{T}_n$ match the corresponding $\ast$-moments of the original matrix $\mathsf{A}$.
%An algorithmic implementation of the procedure to construct $\mathsf{T}_n$ is presented. 
Theorem~\ref{thm:mmp} is proved with the tools developed in the subsequent Subsection~\ref{subsec:OP}. Section~\ref{sec:conv} is devoted to the convergence and breakdown properties of the algorithm, while examples of its use are given in Section~\ref{Examples}. In Section~\ref{SecNumerical} we outline a way to implement the Lanczos-like procedure numerically and we evaluate its computational cost. Section \ref{sec:conc} concludes the paper.

\section{The $\ast$-Lanczos Algorithm}\label{AlgoSection}
\subsection{The $\ast$-product and $\ast$-moments}\label{ProdMoment}
In this section, we recall the definition and the main properties of the product introduced in \cite[Section 1.2]{GisPozInv19}, complementing them with new results and examples.

Let $t$ and $t'$ be two real variables. We consider the class $\D$ of all distributions which are linear superpositions of Heaviside theta functions and Dirac delta derivatives with smooth coefficients over $I^2$. That is, a distribution $d$ is in $\D$ if and only if it can be written as
\begin{equation}\label{eq:genfun}
d(t',t)=\widetilde{d}(t',t)\Theta(t'-t) + \sum_{i=0}^N \widetilde{d}_i(t',t)\delta^{(i)}(t'-t),  
\end{equation}
where $N\in \mathbb{N}$ is finite, $\Theta(\cdot)$ stands for the Heaviside theta function (with the convention $\Theta(0)=1$) and $\delta^{(i)}(\cdot)$ is the $i$th derivative of the Dirac delta distribution $\delta= \delta^{(0)}$. Here and from now on, a tilde over a function (e.g., $\widetilde{d}(t',t)$) indicates that it is an ordinary function \emph{smooth} in both $t'\in I$ and $t\in I$. 
Note that we consider distributions as defined by Schwartz (\cite{schwartz1952,schwartz1978}). Hence a distribution $f \in \D$ should be interpreted as a linear functional applied to test functions.

We can endow the class $\D$ with a non-commutative algebraic structure upon defining a product between its elements. For $f_1, f_2
\in \D$ we define the convolution-like $\ast$ product between $f_1(t',t)$ and $f_2(t',t)$ as
\begin{equation}\label{eq:def:*}
  \big(f_2 * f_1\big)(t',t) := \int_{-\infty}^{\infty} f_2(t',\tau) f_1(\tau, t) \, \text{d}\tau,
\end{equation}
that has as identity element the Dirac delta distribution, $1_\ast:=\delta(t'-t)$.
When $f(t',t) = f(t'-t)$ has bounded supporting set, the $\ast$-product $f \ast g$ (and $g \ast f$) is equivalent to the convolution product for distributions defined by Schwartz (\cite[\S~11]{schwartz1952} and \cite[Chapter~VI]{schwartz1978}). Since $\delta^{(i)}(t'-t)$ has bounded supporting set, given $f \in \D$, the $\ast$-product $\delta^{(i)}(t'-t) \ast f$ and $f \ast \delta^{(i)}(t'-t)$ are well-defined and are both elements of $\D$; see \cite{GisPozInv19} for further details. Moreover, it holds 
\begin{align*}
 \delta^{(i)}(t'-t) \ast \delta^{(j)}(t'-t) &= \delta^{(j)}(t'-t) \ast \delta^{(i)}(t'-t) = \delta^{(i+j)}(t'-t); \\ 
   \Theta(t'-t)\ast \delta'(t'-t) &= \delta'(t'-t) \ast \Theta(t'-t) = \delta(t'-t).
\end{align*}

Consider the subclass $\Sm$ of $\D$ comprising those distributions 
of the form
\begin{equation}\label{PSmForm}
f(t',t)=\tilde{f}(t',t)\Theta(t'-t).
\end{equation}
For $f_1,\,f_2\in \Sm$, the $\ast$-product between $f_1,f_2$ simplifies to
\begin{align*}
  \big(f_2 * f_1\big)(t',t) &= \int_{-\infty}^{\infty} \tilde{f}_2(t',\tau) \tilde{f}_1(\tau, t)\Theta(t'-\tau)\Theta(\tau-t) \, \text{d}\tau,\\ &=\Theta(t'-t)\int_t^{t'} \tilde{f}_2(t',\tau) \tilde{f}_1(\tau, t) \, \text{d}\tau,
\end{align*}
 which makes calculations involving such functions easier to carry out and shows that $\Sm$ is closed under  $\ast$-multiplication.
 Together with the arguments above, this proves that $\D$ is closed under $\ast$-multiplication. Hence, for $f \in \D$, we can define its $k$th \emph{$\ast$-power} $f^{\ast k}$ as the $k$ $\ast$-products $f \ast f \ast \dots \ast f$, with the convention $f^{\ast 0} = \delta(t'-t)$.
 First examples of $\ast$-powers are
 \begin{align}\label{eq:thpow}
  \Theta^{\ast k}(t'-t) &= \frac{(t'-t)^{k-1}}{(k-1)!}\Theta(t'-t); \\
  \left(\delta^{(j)}(t'-t)\right)^{\ast k} &= \delta^{(kj)}(t'-t).
 \end{align}
 
  Note that, for members of $\Sm$, the $\ast$-product reduces exactly to the Volterra composition, a product between smooth functions of two-variables developed by Volterra and Pérès \cite{Volterra1928}. Volterra composition in the form above has seen little use since the 1950s because of perceived defects, such as the lack of identity element, which find remedies in the theory of distributions.

We illustrate the $\ast$-product behavior with the following example.
\begin{exm}
 Let $f(t',t) = \tilde{f}(t',t)\Theta(t'-t) = 2\sin(t')t \, \Theta(t'-t)$. Then
$$ \left(f\ast\Theta(t'-t) \right) (t',t) = \int_t^{t'} 2\sin(t')\tau \, \text{d}\tau = \sin(t')(t'^2-t^2)\Theta(t'-t), $$
is the integral of $\tilde{f}$ with respect to $t$,
while 
$$ \left(\Theta(t'-t) \ast f \right) (t',t) = \int_t^{t'} 2\sin(\tau)t \, \text{d}\tau = 2t\left(\cos(t)-\cos(t')\right)\Theta(t'-t), $$
is the integral of $\tilde{f}$ with respect to $t'$.
On the other hand, the $\ast$-products
\begin{align*}
  \left(f\ast\delta'(t'-t) \right) (t',t) 
  &= \int_{-\infty}^{+\infty} 2\sin(t') \tau \, \Theta(t'-\tau)\delta'(\tau-t) \, \text{d}\tau \\
  &= -2\sin(t')\Theta(t'-t) + 2\sin(t')t' \,\delta(t'-t); \\
  \left(\delta'(t'-t)\ast f \right) (t',t) 
  &= \int_{-\infty}^{+\infty} 2\sin(\tau)t \, \Theta(\tau-t)\delta'(t'-\tau) \, \text{d}\tau \\
  &= 2\cos(t')t \,\Theta(t'-t) + 2\sin(t)t \, \delta(t'-t);
\end{align*}
are derived by the formulas
\begin{align*}
    \left(f\ast\delta'(t'-t) \right) (t',t) &= -\left(\frac{\partial}{\partial t} \tilde{f}(t',t)\right)\Theta(t'-t) + \tilde{f}(t',t')\delta(t'-t); \\
  \left(\delta'(t'-t) \ast f \right) (t',t)  &= \left(\frac{\partial}{\partial t'} \tilde{f}(t',t)\right)\Theta(t'-t) + \tilde{f}(t,t)\delta(t'-t);
\end{align*}
see \cite{schwartz1978,GisPozInv19}. From here, one can verify that
$$ \left(f\ast\delta'(t'-t) \right) \ast \Theta(t'-t) = f, \quad \Theta(t'-t) \ast \left(\delta'(t'-t) \ast f \right) = f .$$
\end{exm}
We will not discuss any further the $\ast$-product by a Dirac delta derivative since it would bring us too far from the paper's goals. More details and examples can be found in \cite{GisPozInv19}.

The $\ast$-product extends directly to distributions of $\D$ whose smooth coefficients depend on less than two variables. Indeed, consider a generalized function $f_3(t',t)=\tilde{f}_3(t')\delta^{(i)}(t'-t)$ with $i\geq -1$ and $\delta^{(-1)}= \Theta$. Then
\begin{align*}
\big(f_3 \ast f_1\big)(t',t)&= \tilde{f}_3(t')\int_{-\infty}^{+\infty}  \delta^{(i)}(t'-\tau)f_1(\tau, t) \, \text{d}\tau,\\
\big(f_1\ast f_3\big)(t',t)&=\int_{-\infty}^{+\infty}  f_1(t',\tau)\tilde{f}_3(\tau)\delta^{(i)}(\tau-t) \, \text{d}\tau.
\end{align*}
where $f_1(t',t)$ is defined as before.
Hence the variable of $\tilde{f}_3(t')$ is treated as the left variable of a smooth function of two variables. This observation extends straightforwardly should $\tilde{f}_3$ be constant and, by linearity, to any distribution of $\D$.

The $\ast$-product also naturally extends to matrices whose entries are distributions of $\D$. Consider two of such matrices $\mathsf{A}_1(t',t)$ and $\mathsf{A}_2(t',t)\in \D^{N\times N}$ then
\begin{equation*}
  \big(\mathsf{A}_2 * \mathsf{A}_1\big)(t',t) := \int_{-\infty}^{+\infty} \mathsf{A}_2(t',\tau) \mathsf{A}_1(\tau, t) \, \text{d}\tau,
\end{equation*}
where the sizes of $\mathsf{A}_1, \mathsf{A}_2$ are compatible for the usual matrix product
(here and in the following, we omit the dependency on $t'$ and $t$ when it is clear from the context). As earlier, the $\ast$-product is associative and distributive with respect to the addition, but it is non-commutative. The identity element with respect to this product is now $\mathsf{Id}_\ast:=\mathsf{Id}\,1_\ast$, with $\mathsf{Id}$ the identity matrix of appropriate size.

Given a square matrix $\mathsf{A}(t',t)$ composed of elements from $\D$, we define the $k$-th \emph{matrix $\ast$-power} $\mathsf{A}^{\ast k}$ as the $k$ $\ast$-products $\mathsf{A}\ast \mathsf{A} \ast \dots \ast \mathsf{A}$. 
In particular, by \eqref{eq:thpow} we get the bound
\begin{equation*}
 \| \mathsf{A}^{*k}(t',t)\|_{\star} \leq \left(\sup_{\substack{\tau \geq \rho \\ \tau, \rho \in I}}\| \mathsf{A}(\tau,\rho)\|_{\star}\right)^{k} \frac{(t'-t)^{k-1}}{(k-1)!}\Theta(t'-t); \quad t',t \in I,
\end{equation*}
with $\| \cdot \|_\star$ any induced matrix norm. As a consequence,
the $\ast$-resolvent of any matrix depending on at most two variables is well defined, as  
$\mathsf{R}_{\ast}(\mathsf{A}):=\left(\mathsf{Id}_\ast-\mathsf{A}\right)^{\ast-1}=\mathsf{Id}_{\ast}+\sum_{k\geq 1} \mathsf{A}^{\ast k}$ exists provided  every entry of $\mathsf{A}$ is bounded for all $t',t \in I$ (see \cite{Giscard2015}). Then 
\begin{equation}\label{OrderedExp}
\mathsf{U}(t',t)=\Theta(t'-t) \ast \mathsf{R}_{\ast}(\mathsf{A})(t',t)
\end{equation}
is the time-ordered exponential of $\mathsf{A}(t',t)$; see \cite{Giscard2015}. Note that time-ordered exponentials are usually presented with only one-time variable, corresponding to $\mathsf{U}(t)=\mathsf{U}(t,0)$. Yet, in general $\mathsf{U}(t',t)\neq\mathsf{U}(t'-t,0)$.

In the spirit of the Lanczos algorithm, given a time-dependent matrix $\mathsf{A}(t',t)$, we will construct a matrix $\mathsf{T}_n(t',t)$ of size $n\leq N$ with a simpler (tridiagonal) structure and so that, fixing the indexes $k,\ell$, it holds
\begin{equation}\label{eq:prop:earlymmp}
    \big(\mathsf{A}^{*j}(t',t)\big)_{k,\ell} = \big(\mathsf{T}_n^{*j}(t',t)\big)_{1,1}, \quad \text{ for } \quad j=0,\dots, 2n-1, \quad t',t \in I;
\end{equation}
compare it with \eqref{eq:classical:mmp}.
In particular, when $n=N$ Property \eqref{eq:prop:earlymmp} stands for every $j\geq 0$, giving
$$ \mathsf{R}_{\ast}(\mathsf{A})_{k,\ell} = \mathsf{R}_{\ast}(\mathsf{T}_N)_{1,1}. $$
Hence the solution is given by the path-sum techniques exploiting the fact that the graph having $\mathsf{T}_N$ as its adjacency matrix is a path that admits self-loops.
More in general, given time-independent vectors $\boldsymbol{v},\boldsymbol{w}$ we call the \emph{$j$th $\ast$-moment} of $\mathsf{A},\boldsymbol{v},\boldsymbol{w}$ the scalar function $\boldsymbol{w}^H (\mathsf{A}^{*j}(t',t))\, \boldsymbol{v}$, for $j \geq 0$ (note that when the product is omitted, it stands for the usual matrix-vector product).
Then Property \eqref{eq:prop:earlymmp} is an instance of the more general case
\begin{equation*}
    \boldsymbol{w}^H (\mathsf{A}^{*j}(t',t))\, \boldsymbol{v} = \boldsymbol{e}_1^H (\mathsf{T}_n^{*j}(t',t))\, \boldsymbol{e}_1, \quad \text{ for } \quad j=0,\dots, 2n-1, \quad t',t \in I.
\end{equation*}

\subsection{Building up the $\ast$-Lanczos process}\label{BuildT}
Given a doubly time-dependent matrix  $\mathsf{A}(t',t)=\widetilde{\mathsf{A}}(t')\Theta(t'-t)$ and $k+1$ scalar generalized functions $\alpha_0(t',t), \alpha_1(t',t),\dots,\alpha_k({t',t}) \in \D$ which play the role of the \emph{coefficients}, we define the \emph{matrix $*$-polynomial} $p(\mathsf{A})(t',t)$ of degree $k$ as 
\begin{equation*}
    p(\mathsf{A})(t',t) := \sum_{j=0}^k \left(\mathsf{A}^{*j}*\alpha_j \right)(t',t);
\end{equation*}
moreover, we define the corresponding \emph{dual} matrix $\ast$-polynomial as
\begin{equation*}
    p^D(\mathsf{A})(t',t) := \sum_{j=0}^k \left(\bar{\alpha}_j*(\mathsf{A}^{*j}) \right)(t',t),
\end{equation*}
 where, in general, $\bar{d}$ is the conjugated value of $d \in \D$  and it is defined by conjugating the functions $\widetilde{d}$ and $\widetilde{d}_i$ in \eqref{eq:genfun}.
Let $\boldsymbol{v}$ be a time independent vector, we can define the set of time-dependent vectors $p(\mathsf{A})\boldsymbol{v}$, with $p$ a matrix $\ast$-polynomial. Such a set is a vector space with respect to the product $*$ and with scalars $\alpha_j(t',t)$ (the addition is the usual addition between vectors).
Similarly, given a vector $\boldsymbol{w}^H$ not depending on time, we can define the vector space given by the dual vectors $\boldsymbol{w}^H p^D(\mathsf{A})$. 
In particular, we can define the $\ast$-Krylov subspaces
\begin{align*}
    \mathcal{K}_n(\mathsf{A}, \boldsymbol{v})(t',t) &:= \left\{\, \left(\,p(\mathsf{A})\boldsymbol{v}\right)(t',t) \,\, \vert \,\, p \text{ of degree at most } n-1  \right\}, \\
    \mathcal{K}^D_n(\mathsf{A}, \boldsymbol{w})(t',t) &:= \left\{\, \left(\,\boldsymbol{w}^H p^D(\mathsf{A})\right)(t',t) \,\, \vert \,\, p \text{ of degree at most } n-1  \right\}. 
\end{align*}
The vectors  $\boldsymbol{v}, \mathsf{A}\boldsymbol{v}, \dots,\mathsf{A}^{*(n-1)}\boldsymbol{v}$ and $\boldsymbol{w}^H, \boldsymbol{w}^H \mathsf{A}, \dots,\boldsymbol{w}^H \mathsf{A}^{*(n-1)}$ are bases respectively for $\mathcal{K}_n(\mathsf{A}, \boldsymbol{v})$ and $\mathcal{K}^D_n(\mathsf{A}, \boldsymbol{w})$. 
We aim to derive \emph{$*$-biorthonormal}  bases $\boldsymbol{v}_0, \dots, \boldsymbol{v}_{n-1}$ and $\boldsymbol{w}_0^H, \dots, \boldsymbol{w}_{n-1}^H$ for the $\ast$-Krylov subspaces, i.e., so that
\begin{equation}\label{eq:v:orth:cond}
    \boldsymbol{w}_i^H * \boldsymbol{v}_j = \delta_{ij} \, 1_*,
\end{equation}
with $\delta_{ij}$ the Kronecker delta.

Assume that $\boldsymbol{w}^H \boldsymbol{v} = 1$, we can use a non-Hermitian Lanczos like biorthogonalization process for the triplet $(\boldsymbol{w}, \mathsf{A}(t',t), \boldsymbol{v})$. We shall call this method the \emph{$*$-Lanczos process}.
The first vectors of the biorthogonal bases are
\begin{equation*}
    \boldsymbol{v}_0 = \boldsymbol{v} \, 1_\ast, \quad
    \boldsymbol{w}_0^H = \boldsymbol{w}^H 1_\ast,
\end{equation*}
so that $\boldsymbol{w}_0^H * \boldsymbol{v}_0 = 1_*$.
Consider now a vector $\widehat{\boldsymbol{v}}_1 \in \mathcal{K}_2(\mathsf{A},\boldsymbol{v})$ given by
\begin{equation*}
    \widehat{\boldsymbol{v}}_1 = \mathsf{A}*\boldsymbol{v}_0 - \boldsymbol{v}_0 * \alpha_0=\mathsf{A}\boldsymbol{v} - \boldsymbol{v}\alpha_0.
\end{equation*}
If the vector satisfies the  $*$-biorthogonal condition $\boldsymbol{w}_0^H * \widehat{\boldsymbol{v}}_1 = 0$, then
\begin{equation}\label{eq:alpha0}
    \alpha_0 = \boldsymbol{w}_0^H * \mathsf{A} * \boldsymbol{v}_0=\boldsymbol{w}^H \mathsf{A}\,\boldsymbol{v}.
\end{equation}
Similarly, we get the expression
\begin{equation*}
    \widehat{\boldsymbol{w}}^H_1 = \boldsymbol{w}^H_0*\mathsf{A} - \alpha_0*\boldsymbol{w}^H_0=\boldsymbol{w}^H\mathsf{A} - \alpha_0\boldsymbol{w}^H,
\end{equation*}
with $\alpha_0$ given by \eqref{eq:alpha0}.
Hence the $*$-biorthonormal vectors are defined as
\begin{equation*}
    \boldsymbol{v}_1 = \widehat{\boldsymbol{v}}_1 * \beta_1^{*-1}, \quad 
    \boldsymbol{w}_1 = \widehat{\boldsymbol{w}}_1,
\end{equation*}
with $\beta_1=\widehat{\boldsymbol{w}}_1^H*\widehat{\boldsymbol{v}}_1$ and $\beta_1^{*-1}$ its \emph{$\ast$-inverse}, i.e., $\beta_1^{*-1}\ast\beta_1 = \beta_1 \ast \beta_1^{*-1}= 1_*$. We give sufficient conditions for the existence of such $\ast$-inverses below.
Assume now that we have the $*$-biorthonormal bases $\boldsymbol{v}_0, \dots, \boldsymbol{v}_{n-1}$ and $\boldsymbol{w}_0^H, \dots, \boldsymbol{w}_{n-1}^H$. Then we can build the vector
\begin{equation*}
    \widehat{\boldsymbol{v}}_n = \mathsf{A}*\boldsymbol{v}_{n-1} - \sum_{i=0}^{n-1} \boldsymbol{v}_{i}*\gamma_{i},
\end{equation*}
where the $\gamma_i$ are determined by the condition $\boldsymbol{w}_j^H * \widehat{\boldsymbol{v}}_n = \delta_{jn}1_\ast$, for $j=0,\dots,n-1$, giving
\begin{equation*}
    \gamma_j = \boldsymbol{w}_j^H * \mathsf{A} * \boldsymbol{v}_{n-1}, \quad j=0,\dots,n-1.
\end{equation*}
In particular, since $\boldsymbol{w}_j^H*\mathsf{A} \in \mathcal{K}_{j+1}^D(\mathsf{A},\boldsymbol{w})$ we get $\gamma_j = 0$ for $j=0,\dots,n-3$.
This leads to the following three-term recurrences for $n=1,2,\dots$ using the convention $\boldsymbol{v}_{-1} = \boldsymbol{w}_{-1} = 0$,
\begin{subequations}\label{Recurrences}
\begin{align}
    \boldsymbol{w}_n^H &= \boldsymbol{w}_{n-1}^H*\mathsf{A} - \alpha_{n-1}*\boldsymbol{w}_{n-1}^H - {\beta}_{n-1}*\boldsymbol{w}^H_{n-2},\\
    \boldsymbol{v}_n*\beta_{n} &= \mathsf{A}*\boldsymbol{v}_{n-1} - \boldsymbol{v}_{n-1}*\alpha_{n-1} - \boldsymbol{v}_{n-2},
\end{align}
\end{subequations}
with the coefficients given by
\begin{equation}\label{eq:lanc:coeff}
    \alpha_{n-1} = \boldsymbol{w}_{n-1}^H * \mathsf{A} * \boldsymbol{v}_{n-1}, \quad
    \beta_n = \boldsymbol{w}_n^H * \mathsf{A} * \boldsymbol{v}_{n-1}.
\end{equation}
Should $\beta_n$ not be $*$-invertible, we would get a \emph{breakdown} in the algorithm, since it would be impossible to compute $\boldsymbol{v}_n$. We developed a range of general methods to determine the $\ast$-inverse of functions of two-time variables which are gathered in \cite{GisPozInv19}.
These methods \emph{constructively} show the existence of $\beta_n^{\ast-1}$ almost everywhere on $I$ under the following conditions: 
\begin{itemize}
    \item $\beta_n\in \Sm$;
    \item $\beta_n\not \equiv 0$ on $I^2$.
\end{itemize}
Here the last condition means that $\beta_n$ is not identically null over $I^2$.
The question of whether or not all $\alpha_n, \beta_n\in \Sm$ was settled affirmatively in \cite{GisPoz21}. 
Let us define
$$\beta_n^{(1,0)}(t',t) := \frac{\partial}{\partial t} \beta_n(t',t), \quad t',t \in I. $$
In \cite{GisPoz21} we proved that if $\mathsf{A}(t',t) = \widetilde{\mathsf{A}}(t')\Theta(t'-t)$, composed of elements from $\Sm$, and if that $ \beta_n^{(1,0)}(t,t) \neq 0$ for every $t \in I$, 
then the coefficients $\alpha_n, \beta_n$ are in $\Sm$. The $\beta_n$ are thus $\ast$-invertible and, furthermore, their $\ast$-inverses take on a particular form (see also Theorem \ref{conj:coeff} in Subsection \ref{ConvSubsec}).
Morever given $\rho \in I$, $\beta_n^{(1,0)}(\rho,\rho)\neq0$ if and only if the (usual) non-Hermitian Lanczos algorithm does not breakdown when running on ${\mathsf{A}}(\rho), \boldsymbol{w}, \boldsymbol{v}$; see \cite{GisPoz21}.
% As a consequence, we can say that $\ast$-Lanczos does not breakdow unless there exist a $\rho \in I$ for which the (usual) non-Hermitian Lanczos algorithm with input ${\mathsf{A}}(\rho), \boldsymbol{w}, \boldsymbol{v}$ does breakdow.
% the algorithm breakdowns if and only if $\beta_n \equiv 0$.
% Moreover, in \cite{GisPoz21}, we proved that $\beta_n \equiv 0$ is related to a breakdown in the (usual) non-Hermitian Lanczos algorithm. Indeed, if there exists at least one $\rho\in I$ so that the non-Hermitian Lanczos algorithm does not breakdown when running on ${\mathsf{A}}(\rho), \boldsymbol{w}, \boldsymbol{v}$, then the $\ast$-Lanczos algorithm does not breakdown.
Since the issue of breakdowns of the $\ast$-Lanczos algorithm is connected with the behavior of (usual) Lanczos techniques, we proceed as it is common when working with the non-Hermitian Lanczos algorithm. Thus, from now on, we assume all $\beta_n$ to be $*$-invertible and so that $\beta_n^{(1,0)}(t,t)\neq 0$ for every $t \in I$, while we come back to the issue of breakdowns in Section~\ref{subsec:breakdow}\\[-.5em]

The $*$-orthogonalization process described above defines the $*$-Lanczos algorithm (Table \ref{algo:*lan}).
The reason for this name is that the algorithm resembles the original Lanczos algorithm. Indeed, if all the inputs were \emph{time-independent}, and if we substituted $1_*$ with $1$ and the $*$ product with the usual matrix-vector or scalar-vector products, then Algorithm \ref{algo:*lan} would be mathematically equivalent to the non-Hermitian Lanczos algorithm.

\begin{table}
     \noindent\fbox{
\parbox{0.95\textwidth}{\begin{center}
\parbox{0.9\textwidth}{
\bigskip

 \noindent \underline{Input:} A complex time-dependent matrix $\mathsf{A}=\widetilde{\mathsf{A}}(t')\Theta(t'-t)$, and time-independent
                  complex vectors $\boldsymbol{v},\boldsymbol{w}$ such that $\boldsymbol{w}^H\boldsymbol{v} = 1$.

 \noindent \underline{Output:} Vectors $\boldsymbol{v}_0,\dots,\boldsymbol{v}_{n-1}$ and vectors $\boldsymbol{w}_0,\dots,\boldsymbol{w}_{n-1}$
  spanning respectively
  $\mathcal{K}_n(\mathsf{A},\boldsymbol{v})$,
  $\mathcal{K}_n(\mathsf{A},\boldsymbol{w})$
  and satisfying the $*$-biorthogonality conditions \eqref{eq:v:orth:cond}. The coefficients $\alpha_0, \dots, \alpha_{n-1}$ and $\beta_1,\dots,\beta_n$ from the recurrences \eqref{Recurrences}.  \begin{align*}
  & \textrm{Initialize: } \boldsymbol{v}_{-1}=\boldsymbol{w}_{-1}=0, \, \boldsymbol{v}_0 = \boldsymbol{v} \, 1_*, \, \boldsymbol{w}_0^H = \boldsymbol{w}^H 1_*. \\
  &    \alpha_0 = \boldsymbol{w}^H \mathsf{A} \, \boldsymbol{v}, \\
  &    \boldsymbol{w}_1^H = \boldsymbol{w}^H \mathsf{A} -  \alpha_{0}\, \boldsymbol{w}^H,\\
  &    \boldsymbol{\widehat v}_{1} = \mathsf{A}\, \boldsymbol{v} - \boldsymbol{v}\,\alpha_{0}, \\
  &    \beta_1 = \boldsymbol{w}^H \mathsf{A}^{*2} \, \boldsymbol{v} - \alpha_0^{*2}, \\
  &    \qquad\textrm{If } \beta_1 \textrm{ is not $*$-invertible, then stop, otherwise}, \\
  &     \boldsymbol{v}_1 = \boldsymbol{\widehat v}_1 * \beta_1^{*-1}, \\[1em]
  & \textrm{For } n=2,\dots \\ 
  &    \qquad \quad \alpha_{n-1} = \boldsymbol{w}_{n-1}^H * \mathsf{A} * \boldsymbol{v}_{n-1}, \\
  &    \qquad \quad \boldsymbol{w}_{n}^H = \boldsymbol{w}_{n-1}^H * \mathsf{A} -  \alpha_{n-1} * \boldsymbol{w}_{n-1}^H - \beta_{n-1}*\boldsymbol{w}_{n-2}^H, \\
  &    \qquad \quad \boldsymbol{\widehat v}_{n} = \mathsf{A} * \boldsymbol{v}_{n-1} - \boldsymbol{v}_{n-1}*\alpha_{n-1} - \boldsymbol{v}_{n-2}, \\
  &     \qquad \quad \beta_n = \boldsymbol{w}_{n}^H * \mathsf{A} * \boldsymbol{v}_{n-1}, \\
  &     \qquad \qquad\quad \textrm{If } \beta_n \textrm{ is not $*$-invertible, then stop, otherwise}, \\
  &     \qquad \quad  \boldsymbol{v}_n = \boldsymbol{\widehat v}_n * \beta_n^{*-1}, \\
  & \textrm{end}.
  \end{align*}
  }\end{center}}}
  \caption{$*$-Lanczos algorithm.}\label{algo:*lan}
  \end{table} 

Let us define the tridiagonal matrix
\begin{equation}\label{eq:tridiag}
    \mathsf{T}_n := \begin{bmatrix} 
            \alpha_0 & 1_* &          & \\ 
            \beta_1  & \alpha_1 & \ddots & \\ 
                     & \ddots   & \ddots & 1_* \\
                     &           & \beta_{n-1} & \alpha_{n-1}
          \end{bmatrix}, 
\end{equation}
and the matrices $\mathsf{V}_n:=[\boldsymbol{v}_0, \dots, \boldsymbol{v}_{n-1}]$ and $\mathsf{W}_n := [\boldsymbol{w}_0, \dots, \boldsymbol{w}_{n-1}]$. 
Then the three-term recurrences Eqs.~(\ref{Recurrences}) read, in matrix form,
\begin{align*}
    \mathsf{A} * \mathsf{V}_n = \mathsf{V}_n * \mathsf{T}_n + (\boldsymbol{v}_{n} * \beta_n) \boldsymbol{e}_n^H, \\
    \mathsf{W}_n^H * \mathsf{A} = \mathsf{T}_n *\mathsf{W}_n^H +  \boldsymbol{e}_n \, \boldsymbol{w}_n^H.
\end{align*}
Hence the tridiagonal matrix \eqref{eq:tridiag} can be expressed as
\begin{equation*}
    \mathsf{T}_n = \mathsf{W}_n^H * \mathsf{A} * \mathsf{V}_n.
\end{equation*}

The following property of $\mathsf{T}_n$ is fundamental for the time-ordered exponential approximation; its proof is given in Section \ref{subsec:OP}.
\begin{theorem}[Matching Moment Property]\label{thm:mmp}
   Let $\mathsf{A},\boldsymbol{w},\boldsymbol{v}$ and $\mathsf{T}_n$ be as described above, then
   \begin{equation}\label{AjTjResult}
        \boldsymbol{w}^H (\mathsf{A}^{*j})\, \boldsymbol{v} = \boldsymbol{e}_1^H (\mathsf{T}_n^{*j})\,\boldsymbol{e}_1, \quad \text{ for } \quad j=0,\dots, 2n-1.
   \end{equation}
\end{theorem}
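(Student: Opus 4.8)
The plan is to transpose the classical proof of the matching moment property \eqref{eq:classical:mmp} to the $*$-algebra, keeping strict track of the order of every factor since $*$ is non-commutative. The whole argument rests on two ingredients already in place: the biorthonormality \eqref{eq:v:orth:cond}, which in matrix form reads $\mathsf{W}_n^H*\mathsf{V}_n=\mathsf{Id}_*$, and the matrix form of the three-term recurrences \eqref{Recurrences}, namely $\mathsf{A}*\mathsf{V}_n=\mathsf{V}_n*\mathsf{T}_n+(\boldsymbol{v}_n*\beta_n)\boldsymbol{e}_n^H$ and $\mathsf{W}_n^H*\mathsf{A}=\mathsf{T}_n*\mathsf{W}_n^H+\boldsymbol{e}_n\boldsymbol{w}_n^H$. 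Since $\boldsymbol{v}_0=\boldsymbol{v}\,1_*$ and $\boldsymbol{w}_0^H=\boldsymbol{w}^H 1_*$ with $1_*$ the $*$-identity, the left side of \eqref{AjTjResult} equals $\boldsymbol{w}_0^H*\mathsf{A}^{*j}*\boldsymbol{v}_0$, so the claim reduces to comparing this quantity with $\boldsymbol{e}_1^H*\mathsf{T}_n^{*j}*\boldsymbol{e}_1$.

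First I would establish the two \emph{Krylov-image} identities: for $0\le j\le n-1$,
\[\mathsf{A}^{*j}*\boldsymbol{v}_0=\mathsf{V}_n*\mathsf{T}_n^{*j}*\boldsymbol{e}_1,\qquad \boldsymbol{w}_0^H*\mathsf{A}^{*j}=\boldsymbol{e}_1^H*\mathsf{T}_n^{*j}*\mathsf{W}_n^H.\]
Both are proven by induction on $j$. The base case $j=0$ is immediate from $\mathsf{V}_n*\boldsymbol{e}_1=\boldsymbol{v}_0$ and $\boldsymbol{e}_1^H*\mathsf{W}_n^H=\boldsymbol{w}_0^H$. For the inductive step one applies the matrix recurrence, e.g. $\mathsf{A}^{*(j+1)}*\boldsymbol{v}_0=(\mathsf{A}*\mathsf{V}_n)*\mathsf{T}_n^{*j}*\boldsymbol{e}_1=\mathsf{V}_n*\mathsf{T}_n^{*(j+1)}*\boldsymbol{e}_1+(\boldsymbol{v}_n*\beta_n)\,(\boldsymbol{e}_n^H*\mathsf{T}_n^{*j}*\boldsymbol{e}_1)$. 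The remainder is governed by the $(n,1)$ entry of $\mathsf{T}_n^{*j}$; since $\mathsf{T}_n$ in \eqref{eq:tridiag} has lower $*$-bandwidth one, $\mathsf{T}_n^{*j}$ has lower $*$-bandwidth $j$, so this entry vanishes for $j\le n-2$ — exactly the range needed to push the induction up to $j=n-1$. The dual identity follows symmetrically, using that the $(1,n)$ entry of $\mathsf{T}_n^{*j}$ vanishes for $j\le n-2$ by the upper bandwidth.

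With these in hand I would split $j=j_1+j_2$ and compute, for $0\le j_1,j_2\le n-1$,
\[\boldsymbol{w}_0^H*\mathsf{A}^{*j}*\boldsymbol{v}_0=(\boldsymbol{e}_1^H*\mathsf{T}_n^{*j_1}*\mathsf{W}_n^H)*(\mathsf{V}_n*\mathsf{T}_n^{*j_2}*\boldsymbol{e}_1)=\boldsymbol{e}_1^H*\mathsf{T}_n^{*j}*\boldsymbol{e}_1,\]
where the middle cancellation is precisely the biorthonormality $\mathsf{W}_n^H*\mathsf{V}_n=\mathsf{Id}_*$. This settles \eqref{AjTjResult} for all $0\le j\le 2n-2$. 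The endpoint $j=2n-1$ cannot be written as $j_1+j_2$ with both indices $\le n-1$, so I would treat it by keeping $j_1=n-1$ on the dual side and pushing the right factor one step further to $\mathsf{A}^{*n}*\boldsymbol{v}_0=\mathsf{V}_n*\mathsf{T}_n^{*n}*\boldsymbol{e}_1+(\boldsymbol{v}_n*\beta_n)\,(\boldsymbol{e}_n^H*\mathsf{T}_n^{*(n-1)}*\boldsymbol{e}_1)$. Here the remainder no longer vanishes (its scalar factor is $\beta_{n-1}*\cdots*\beta_1\ne0$), but after multiplying on the left by $\boldsymbol{w}_0^H*\mathsf{A}^{*(n-1)}=\boldsymbol{e}_1^H*\mathsf{T}_n^{*(n-1)}*\mathsf{W}_n^H$ the offending term carries the factor $\mathsf{W}_n^H*\boldsymbol{v}_n$, which is zero because $\boldsymbol{v}_n$ is $*$-biorthogonal to $\boldsymbol{w}_0,\dots,\boldsymbol{w}_{n-1}$ by construction; the surviving term is again $\boldsymbol{e}_1^H*\mathsf{T}_n^{*(2n-1)}*\boldsymbol{e}_1$.

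The main obstacle I anticipate is not any single step but the bookkeeping forced by non-commutativity: every manipulation that is trivial for ordinary Lanczos — factoring $\mathsf{A}^{*j}$, inserting $\mathsf{Id}_*$, rearranging the banded products — must be justified verbatim in the order given, and the bandedness argument for $\mathsf{T}_n^{*j}$, though structurally identical to the scalar case, must be checked to survive the replacement of numerical entries by elements of $\D$ (it does, since the zero pattern depends only on which $*$-products are present, not on their values). A secondary point to verify is that the recurrence coefficients \eqref{eq:lanc:coeff} indeed enforce $\mathsf{W}_n^H*\mathsf{V}_n=\mathsf{Id}_*$ with the $\beta_n$ being $*$-invertible, which is guaranteed by the standing no-breakdown assumption. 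Organizing the whole computation through the $*$-orthogonal polynomials $p_j$ with $\boldsymbol{v}_j=p_j(\mathsf{A})*\boldsymbol{v}$, as developed in Subsection~\ref{subsec:OP}, makes the bandedness and cancellation bookkeeping systematic, which is the route I would ultimately follow.
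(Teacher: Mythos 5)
Your proposal is correct, and its argument checks out in the non-commutative setting: the zero pattern of $\mathsf{T}_n^{*j}$ depends only on which $*$-products occur, so the bandedness claim $(\mathsf{T}_n^{*j})_{n,1}=0$ for $j\leq n-2$ survives the replacement of numbers by distributions; the splitting $j=j_1+j_2$ with the cancellation $\mathsf{W}_n^H*\mathsf{V}_n=\mathsf{Id}\,1_*$ is legitimate by associativity; and your endpoint argument for $j=2n-1$ works because the remainder term in the matrix recurrence is $\widehat{\boldsymbol{v}}_n\boldsymbol{e}_n^H=(\boldsymbol{v}_n*\beta_n)\boldsymbol{e}_n^H$ and $\mathsf{W}_n^H*\widehat{\boldsymbol{v}}_n=0$ holds by construction of the $*$-orthogonalization, without even needing $\beta_n$ to be $*$-invertible. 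However, this is a genuinely different route from the paper's. You transpose the classical Krylov/matrix-identity proof: image identities $\mathsf{A}^{*j}\,\boldsymbol{v}_0=\mathsf{V}_n*\mathsf{T}_n^{*j}\,\boldsymbol{e}_1$ and their duals up to $j=n-1$, then biorthogonality to glue two halves together. The paper instead stays entirely inside the $*$-biorthogonal polynomial framework of Subsection~\ref{subsec:OP}: Lemma~\ref{lemma:op:basis} shows that the polynomials $p_0,\dots,p_{n-1}$ and $q_0,\dots,q_{n-1}$ attached to $\mathsf{A},\boldsymbol{w},\boldsymbol{v}$ are also $*$-biorthonormal for the form $[\cdot,\cdot]_n$ built from $\mathsf{T}_n,\boldsymbol{e}_1$, with the same recurrence coefficients; a second lemma then proves, by induction on the moments using the coefficient formulas \eqref{eq:p:coeff}, that any two $*$-sesquilinear forms sharing the same biorthonormal polynomials and normalized by $m_0=1_*$ must agree on the first $2n$ moments. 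Your approach buys directness: it is shorter, mirrors the textbook Lanczos proof line by line, and needs nothing beyond the recurrences \eqref{Recurrences} and biorthonormality \eqref{eq:v:orth:cond}. The paper's approach buys reusable structure: the moment-matching statement is proved at the level of abstract forms (so it is independent of any matrix realization), and the intermediate facts from the proof of Lemma~\ref{lemma:op:basis}, such as $\boldsymbol{e}_1^H q_j^D(\mathsf{T}_n)=1_*\,\boldsymbol{e}_{j+1}^H$, are invoked again later, e.g., in the lucky-breakdown proposition of Section~\ref{subsec:breakdow}. Your closing remark that you would ultimately organize the computation through the polynomials $p_j$, $q_j$ is precisely the choice the paper makes, but the proof you actually sketch stands on its own without that machinery.
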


Consider the time-ordered exponential $\mathsf{U}_n$ given by the differential equation 
\begin{equation}\label{eq:Un:def}
\mathsf{T}_n(t',t) \mathsf{U}_n(t',t)=\frac{d}{dt'}\mathsf{U}_n(t',t), \quad \mathsf{U}_n(t,t)=\mathsf{Id}.
\end{equation}
Theorem~\ref{thm:mmp} and Eq.~\eqref{OrderedExp} justify the use of the approximation 
\begin{equation}\label{eq:approx}
\boldsymbol{w}^H\mathsf{U}(t',t)\,\boldsymbol{v} \approx \boldsymbol{e}_1^H\mathsf{U}_n(t',t)\,\boldsymbol{e}_1
=\Theta(t'-t)\int_t^{t'} \mathsf{R}_{\ast}(\mathsf{T}_n)_{1,1}(\tau,t)\, \text{d}\tau;
\end{equation}
see \cite{GisPoz21}.
The system \eqref{eq:Un:def} can be seen as a reduced order model of the initial differential Eq. \eqref{FundamentalSystem} from two points of view.
First, $n$ may be much smaller than the size of $\mathsf{A}$; in this sense, in Section \ref{sec:conv}, we will discuss the convergence behavior of the approximation using Theorem \ref{thm:mmp}.
Secondly, looking at $\mathsf{A}$ and $\mathsf{T}_n$ as adjacency matrices, $\mathsf{A}$ may correspond to a graph with a complex structure, while $\mathsf{T}_n$ corresponds to a very simple graph composed of one path with possible self-loops.
Then the path-sum method gives
\begin{equation}\label{PSresult}
\mathsf{R}_{\ast}(\mathsf{T}_n)_{1,1}(t',t) 
= \Big(1_\ast - \alpha_0-\big(1_\ast-\alpha_1-(1_\ast-...)^{\ast-1}\ast\beta_2\big)^{\ast-1}\ast\beta_1\Big)^{\ast-1},
\end{equation}
see \cite{Giscard2012,Giscard2015}. This expression is analogous to the one for the first diagonal entry of the inverse of an ordinary tridiagonal matrix \cite{Kilic2008} (see also \cite{GolMeuBook10,LieStrBook13} for Jacobi matrices), except here all operations are taken with respect to the $\ast$-product.

For $n=N$, we get 
\begin{equation}\label{eq:orth:WV}
\mathsf{V}_N * \mathsf{W}_N^H = \mathsf{W}_N^H * \mathsf{V}_N = \mathsf{Id} \, 1_*.
\end{equation}
As a consequence 
\begin{equation}\label{eq:exact}
        \boldsymbol{w}^H (\mathsf{A}^{*j})\, \boldsymbol{v} = \boldsymbol{e}_1^H (\mathsf{T}_{N}^{*j})\,\boldsymbol{e}_1, \quad \text{ for } \quad j=0,1,\dots,
\end{equation}
and therefore the approximation \eqref{eq:approx} is actually exact. More generally, for $n=N$, the $\ast$-Lanczos algorithm produces the full tridiagonalization of $\mathsf{A}$ and of its time-ordered exponential, in a manner analogous to \eqref{eq:full:tridiag} and \eqref{eq:exptridiag}; see \cite{GisPoz21}.
\begin{theorem}\label{thm:full:tridiag}
   Let $\mathsf{A},\mathsf{V}_N,\mathsf{W}_N$ and $\mathsf{T}_N$ be as described above, then
   $$ \mathsf{A}^{\ast j} = \mathsf{V}_N \ast \mathsf{T}_N^{\ast j} \ast \mathsf{W}_N^H, \quad j = 0,1, \dots,  $$
   and thus
   $$ \mathsf{R}_\ast(\mathsf{A}) = \mathsf{V}_N \ast \mathsf{R}_\ast(\mathsf{T}_N) \ast \mathsf{W}_N^H.  $$
\end{theorem}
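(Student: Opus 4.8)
The plan is to reduce everything to two facts already in hand for $n=N$: the $\ast$-biorthogonality relation \eqref{eq:orth:WV}, namely $\mathsf{V}_N \ast \mathsf{W}_N^H = \mathsf{W}_N^H \ast \mathsf{V}_N = \mathsf{Id}\,1_\ast$, which says that $\mathsf{V}_N$ and $\mathsf{W}_N^H$ are mutually $\ast$-inverse square matrices, together with the identity $\mathsf{T}_N = \mathsf{W}_N^H \ast \mathsf{A} \ast \mathsf{V}_N$ obtained above from the matrix form of the recurrences. From these the case $j=1$ is immediate: $\ast$-multiplying $\mathsf{T}_N = \mathsf{W}_N^H \ast \mathsf{A} \ast \mathsf{V}_N$ on the left by $\mathsf{V}_N$ and on the right by $\mathsf{W}_N^H$, and using associativity of $\ast$ together with \eqref{eq:orth:WV}, collapses the inner $\mathsf{W}_N^H\ast$ and $\ast\mathsf{V}_N$ factors against the outer $\mathsf{V}_N\ast$ and $\ast\mathsf{W}_N^H$, yielding $\mathsf{V}_N \ast \mathsf{T}_N \ast \mathsf{W}_N^H = \mathsf{A}$.

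Then I would prove $\mathsf{A}^{\ast j} = \mathsf{V}_N \ast \mathsf{T}_N^{\ast j}\ast \mathsf{W}_N^H$ by induction on $j$. The base case $j=0$ reads $\mathsf{Id}\,1_\ast = \mathsf{V}_N \ast (\mathsf{Id}\,1_\ast)\ast \mathsf{W}_N^H$, which is exactly \eqref{eq:orth:WV} since $\mathsf{A}^{\ast 0} = \mathsf{T}_N^{\ast 0} = \mathsf{Id}\,1_\ast$. For the inductive step I would write $\mathsf{A}^{\ast(j+1)} = \mathsf{A}\ast \mathsf{A}^{\ast j}$, substitute $\mathsf{A} = \mathsf{V}_N \ast \mathsf{T}_N \ast \mathsf{W}_N^H$ and the inductive hypothesis $\mathsf{A}^{\ast j} = \mathsf{V}_N \ast \mathsf{T}_N^{\ast j}\ast \mathsf{W}_N^H$, and use associativity of $\ast$ together with $\mathsf{W}_N^H \ast \mathsf{V}_N = \mathsf{Id}\,1_\ast$ to cancel the central $\mathsf{W}_N^H \ast \mathsf{V}_N$ factor, leaving $\mathsf{V}_N \ast \mathsf{T}_N^{\ast(j+1)}\ast \mathsf{W}_N^H$. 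The only properties invoked are associativity and distributivity of the matrix $\ast$-product and the two relations above, all valid in $\D^{N\times N}$.

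The resolvent identity then follows by summation. Since every entry of $\mathsf{A}$ is bounded on $I$, the $\ast$-resolvent $\mathsf{R}_\ast(\mathsf{A}) = \mathsf{Id}_\ast + \sum_{j\geq 1}\mathsf{A}^{\ast j}$ converges (and likewise $\mathsf{R}_\ast(\mathsf{T}_N)$ exists), so I would insert the per-power identity term by term and factor $\mathsf{V}_N$ and $\mathsf{W}_N^H$ out of the sum to obtain $\mathsf{R}_\ast(\mathsf{A}) = \mathsf{V}_N \ast \mathsf{R}_\ast(\mathsf{T}_N)\ast \mathsf{W}_N^H$.

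The main obstacle is precisely this last interchange: moving the fixed $\ast$-multiplications by $\mathsf{V}_N$ and $\mathsf{W}_N^H$ past the infinite $\ast$-sum defining the resolvent, i.e. justifying $\mathsf{V}_N \ast \big(\sum_{j}\mathsf{T}_N^{\ast j}\big)\ast \mathsf{W}_N^H = \sum_j \mathsf{V}_N \ast \mathsf{T}_N^{\ast j}\ast \mathsf{W}_N^H$. The algebraic core (the two displayed per-power identities) is a one-line induction once \eqref{eq:orth:WV} is granted; the genuine work is the analytic step, which I would settle from the convergence of the resolvent series established in \cite{Giscard2015} under boundedness of the entries on $I^2$, combined with the linearity and continuity of $\ast$-multiplication by a fixed matrix with entries in $\Sm$.
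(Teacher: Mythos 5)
Your proposal is correct and is essentially the paper's own argument: the paper's entire proof is the single remark that the theorem "follows by using \eqref{eq:orth:WV}", which is exactly what you carry out—sandwiching $\mathsf{T}_N = \mathsf{W}_N^H \ast \mathsf{A} \ast \mathsf{V}_N$ between $\mathsf{V}_N$ and $\mathsf{W}_N^H$, cancelling $\mathsf{W}_N^H \ast \mathsf{V}_N = \mathsf{Id}\,1_\ast$ inductively, and summing the powers (your treatment of the series interchange is in fact more careful than anything in the paper). One minor correction to your last step: the entries of $\mathsf{V}_N$ and $\mathsf{W}_N^H$ lie in $\D$, not $\Sm$ (Dirac delta derivatives appear, as the paper's examples show), so the continuity of $\ast$-multiplication must be invoked for fixed elements of $\D$ rather than of $\Sm$.
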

The theorem follows by using \eqref{eq:orth:WV}. Here, any entry of $R_\ast(T_N)$ is calculable using a path-sum continued of depth at most $N$.

\begin{rem}
The Lanczos-like method presented here for the time-ordered exponential is immediately valid for the ordinary matrix exponential function, since the latter is obtained from the former in the situation where $\mathsf{A}$ commutes with itself at all times,
$$
\mathcal{T}e^{\int \mathsf{A}(\tau) \, \text{d}\tau} = e^{\int_t^{t'} \mathsf{A}(\tau) \, \text{d}\tau}.
$$
This situation includes the case where $\mathsf{A}$ is time-independent, in which case setting $t=0$ and $t'=1$ above yields the matrix exponential of $\mathsf{A}$. However, the $\ast$-Lanczos algorithm cannot be considered a generalization of the Lanczos algorithm since its outputs on constant matrices are made of distributions and time dependent functions.
\end{rem}

\subsection{Matching $\ast$-moments through $*$-biorthonormal polynomials}\label{subsec:OP}
In order to prove Theorem \ref{thm:mmp}, 
we will exploit the connection between the $*$-Lanczos algorithm and families of $*$-biorthonormal polynomials. 
Let us define the set of $*$-polynomials 
\begin{equation*}
    \mathcal{P}_* := \left\{ p(\lambda) = \sum_{j=0}^k \lambda^{*j}*\gamma_j(t',t) \right\},
\end{equation*}
with $\gamma_j(t',t) \in \D$.
Consider a $*$-sesquilinear form $[\cdot, \cdot]: \mathcal{P}_* \times \mathcal{P}_* \rightarrow \D$, i.e., so that given $p_1,p_2,q_1,q_2 \in \mathcal{P}_\ast$ and $\alpha, \beta \in \D$, it satisfies 
\begin{align*}
    [q_1*\alpha, p_1*\beta] &= \bar{\alpha}*[q_1,p_1]*\beta, \\
    [q_1 + q_2, p_1 + p_2] &= [q_1, p_1] + [q_2, p_1] + [q_1, p_2] + [q_2, p_2].
\end{align*}
From now on we assume that every considered $*$-sesquilinear form $[\cdot, \cdot]$ also satisfies
\begin{equation}\label{eq:bfor:prop}
    [\lambda*q,p] = [q, \lambda*p].
\end{equation}
The $*$-sesquilinear form $[\cdot,\cdot]$ is determined by its \emph{$*$-moments} defined as
\begin{equation*}
    m_j(t,t'):= [\lambda^{*j},1] = [1, \lambda^{*j}], \quad j=0,1,\dots \,. 
\end{equation*}

We aim to build sequences of $\ast$-polynomials $p_0, p_1, \dots$ and $q_0, q_1, \dots$ so that they are $*$-biorthonormal with respect to $[\cdot,\cdot]$, i.e., 
\begin{equation}\label{eq:p:orth:cond}
    [q_i, p_j] = \delta_{ij} 1_*,
\end{equation}
where the subindex $j$ in $p_j$ and $q_j$ corresponds to the degree of the $\ast$-polynomial.
Here and in the following we assume $m_0 = 1_*$, getting $p_0 = q_0 = 1_*$.
Consider the $\ast$-polynomial
\begin{equation*}
   q_1(\lambda) = \lambda * q_0(\lambda) - q_0(\lambda)*\bar{\alpha}_0.
\end{equation*}
The orthogonality conditions \eqref{eq:p:orth:cond} give $\alpha_0 = [\lambda*q_0, p_0]$.
% \begin{equation*}
%     \alpha_0 = [\lambda*q_0, p_0].
% \end{equation*}
Similarly, we get the $\ast$-polynomial
\begin{equation*}
   p_1(\lambda) * \beta_1 = \lambda * p_0(\lambda) - p_0(\lambda) * \alpha_0,
\end{equation*}
with $\alpha_0 = [q_0, \lambda*p_0], \beta_1 = [q_1, \lambda*p_0]$.
% \begin{equation*}
%     \alpha_0 = [q_0, \lambda*p_0], \quad  \beta_1 = [q_1, \lambda*p_0].
% \end{equation*}
Repeating the $*$-orthogonalization process, we obtain the three-term recurrences for $n=1,2,\dots$ 
\begin{subequations}\label{eq:p:rec}
\begin{align}
     q_n(\lambda) &= \lambda * q_{n-1}(\lambda) - q_{n-1}(\lambda)*\bar{\alpha}_{n-1} - q_{n-2}(\lambda)*\bar{\beta}_{n-1} \label{eq:p:rec:1} \\
     p_n(\lambda)*\beta_n &= \lambda * p_{n-1}(\lambda) - p_{n-1}(\lambda)*\alpha_{n-1} - p_{n-2}(\lambda),
     \label{eq:p:rec:2}
\end{align}
\end{subequations}
with $p_{-1} = q_{-1} = 0$ and
\begin{equation}\label{eq:p:coeff}
    \alpha_{n-1} = [q_{n-1}, \lambda * p_{n-1}],
    \quad
    \beta_{n} = [q_n, \lambda * p_{n-1}].
\end{equation}
Note that deriving the recurrences needs property \eqref{eq:bfor:prop}.
The $*$-biorthonormal polynomials $p_0, \dots, p_n$ and $q_0, \dots, q_n$ exist under the assumption that $\beta_1, \dots, \beta_n$ are $*$-invertible.

Let $\mathsf{A}$ be a time-dependent matrix, and $\boldsymbol{w},\boldsymbol{v}$ time-independent vectors such that $\boldsymbol{w}^H\boldsymbol{v} = 1$.
Consider the $*$-sesquilinear form $[\cdot, \cdot]$ defined by
\begin{equation*}
    [q, p] = \boldsymbol{w}^H \, q^D(\mathsf{A}) * p(\mathsf{A}) \, \boldsymbol{v}.
\end{equation*}
Assume that there exist $*$-polynomials $p_0, \dots, p_n$ and $q_0,\dots,q_n$ $*$-biorthonormal with respect to $[\cdot, \cdot]$.
Defining the vectors
\begin{equation*}
    \boldsymbol{v}_j = p_j(\mathsf{A})\,\boldsymbol{v}, \qquad \boldsymbol{w}^H_j = \boldsymbol{w}^H\, q_j^D(\mathsf{A}), 
\end{equation*}
and using the recurrences \eqref{eq:p:rec} gives the $*$-Lanczos recurrences \eqref{Recurrences}.
Moreover, the coefficients in \eqref{eq:p:coeff} are the $*$-Lanczos coefficients in \eqref{eq:lanc:coeff}.

 Let $\mathsf{T}_n$ be a tridiagonal matrix as in \eqref{eq:tridiag} composed of the coefficients \eqref{eq:p:coeff} associated with the $*$-sesquilinear form $[\cdot,\cdot]$. 
Then we can define the $*$-sesquilinear form
\begin{equation*}
    [q, p]_n = \boldsymbol{e}_1^H \, q^D(\mathsf{T}_n) * p(\mathsf{T}_n) \, \boldsymbol{e}_1.
\end{equation*}
The following lemmas show that 
\begin{equation*}
   m_j = [\lambda^{*j},1_*] = [\lambda^{*j},1_*]_n, \quad j=0,\dots,2n-1,
\end{equation*}
proving Theorem \ref{thm:mmp}.

\begin{lem}\label{lemma:op:basis}
   Let $p_0,\dots,p_n$ and $q_0,\dots,q_n$ be $*$-biorthonormal polynomials with respect to the $*$-sesquilinear form $[\cdot,\cdot]$.
   Assume that the coefficients $\beta_1,\dots, \beta_n$ in the related recurrences \eqref{eq:p:rec} are $*$-invertible.
   Then the $*$-polynomials are also $*$-biorthonormal with respect to the form $[\cdot,\cdot]_n$ defined above. 
\end{lem}
\begin{proof}
  Consider the vectors $\boldsymbol{y}_j^H = \boldsymbol{e}_1^H \, \mathsf{T}_n^{*j}$ and $\boldsymbol{x}_j= \mathsf{T}_n^{*j} \, \boldsymbol{e}_1$.
  Since the matrix $\mathsf{T}_n$ is tridiagonal, for $j=1,\dots,n-1$, we have 
  \begin{align*}
      \boldsymbol{e}_i^H \boldsymbol{x}_j = 0, \text{ for } i \geq j+2, &\quad \text{ and } \quad
      \boldsymbol{e}_{j+1}^H \boldsymbol{x}_j = \beta_j * \cdots * \beta_{1}, \\
      \boldsymbol{y}_j^H \boldsymbol{e}_i = 0, \text{ for } i \geq j+2, &\quad \text{ and } \quad
      \boldsymbol{y}_j^H \boldsymbol{e}_{j+1} = 1_* \,.
  \end{align*}
  By assumption, the product $\beta_j * \cdots * \beta_1$ is $*$-invertible. 
  Therefore there exist $*$-polynomials $\widehat{p}_0,\dots,\widehat{p}_{n-1}$ and $\widehat{q}_0,\dots,\widehat{q}_{n-1}$ so that,
  for $i=0,\dots,n-1$, we get
  \begin{equation*}
      1_* \boldsymbol{e}_{i+1}^H = \boldsymbol{e}_1^H\,\widehat{q}_i^D(\mathsf{T}_n), \qquad 
      1_* \boldsymbol{e}_{i+1} = \widehat{p}_i(\mathsf{T}_n)\, \boldsymbol{e}_1.
  \end{equation*}
Such $\ast$-polynomials are $*$-biorthonormal with respect to $[\cdot,\cdot]_n$ since they satisfy
\begin{equation*}
    [\widehat{q}_i, \widehat{p}_j]_n = 1_*\boldsymbol{e}_{i+1}^H \ast 1_* \boldsymbol{e}_{j+1} = \delta_{ij} 1_*.
\end{equation*}
Moreover, for $i=0,\dots,n-1$, the corresponding recurrence coefficients \eqref{eq:p:coeff} are the same as the ones of the $\ast$-polynomials $p_0,\dots,p_{n-1}$ and $q_0,\dots,q_{n-1}$. Indeed,
\begin{align*}
    \widehat{\alpha}_{i-1} &= [\widehat{q}_{i-1}, \lambda*\widehat{p}_{i-1}]_n = \boldsymbol{e}_{i-1}^H \mathsf{T}_n \, \boldsymbol{e}_{i-1} = \alpha_{i-1}, \\
    \widehat{\beta_{i}} &= [\widehat{q}_i, \lambda*\widehat{p}_{i-1}]_n = \boldsymbol{e}_{i}^H \, \mathsf{T}_n \boldsymbol{e}_{i-1} = \beta_{i}.
\end{align*}
Since $\widehat{p}_0 = p_0 = \widehat{q}_0 = q_0 = 1_*$, we get $\widehat{p}_i = p_i$ and $\widehat{q}_i = q_i$ for $i=0,\dots,n-1$. 
% \qed
\end{proof}

\begin{lem}
   Let $p_0,\dots,p_{n-1}$ and $q_0,\dots,q_{n-1}$ be $*$-biorthonormal polynomials with respect to a $*$-sesquilinear form $[\cdot,\cdot]_A$
   and to a $*$-sesquilinear form $[\cdot,\cdot]_B$.
   If $[1_*,1_*]_A = [1_*,1_*]_B = 1_*$,
   then $[\lambda^{*j},1_*]_A = [\lambda^{*j},1_*]_B$ for $j=0,\dots,2n-1$.
\end{lem}
\begin{proof}
  We prove it by induction. 
  Let $m_j = [\lambda^{*j},1_*]_A$ and $\widehat{m}_j = [\lambda^{*j},1_*]_B$ for $j=0,1,\dots, 2n-1$.
  By the expression for the coefficients in \eqref{eq:p:coeff} we get
  \begin{equation*}
      [q_{0}, \lambda*p_{0}]_A = \alpha_0 = [q_{0}, \lambda*p_{0}]_B.
  \end{equation*}
  Hence $m_1 = \alpha_0 = \widehat{m}_1$.
  Assuming $m_j = \widehat{m}_j$ for $j=0,\dots,2k-3$
  we will prove that $m_{2k-2} = \widehat{m}_{2k-2}$
  and $m_{2k-1} = \widehat{m}_{2k-1}$, for $k = 2,\dots,n$.
  The coefficient expressions in \eqref{eq:p:coeff} gives
  \begin{equation*}
      [q_{k-1}, \lambda*p_{k-2}]_A = \beta_{k-1} = [q_{k-1}, \lambda*p_{k-2}]_B,
  \end{equation*}
  which can be rewritten as
  \begin{equation*}
      \sum_{i=0}^{k-1}\sum_{j=0}^{k-2} \bar{a}_i * m_{i+j+1} * b_j = \sum_{i=0}^{k-1}\sum_{j=0}^{k-2} \bar{a}_i * \widehat{m}_{i+j+1} * b_j,
  \end{equation*}
  with $a_i, b_j$ the coefficients respectively of $q_{k-1}$ and $p_{k-2}$.
  The inductive assumption implies
    \begin{equation*}
       \bar{a}_{k-1} * m_{2k-2} * b_{k-2} = \bar{a}_{k-1} * \widehat{m}_{2k-2} * b_{k-2}.
  \end{equation*}
  The leading coefficients of the $\ast$-polynomials $q_{2k-2}$ and $p_{2k-2}$ are respectively $a_{k-1}= 1_*$ and $b_{k-2}=(\beta_{k-2} * \cdots *\beta_{1})^{*-1}$. Hence $m_{2k-2} = \widehat{m}_{2k-2}$.
  Repeating the same argument with the coefficient $\alpha_{k-1}$ \eqref{eq:p:coeff} concludes the proof. %\qed
\end{proof}

\section{Convergence, breakdown, and related properties}\label{sec:conv}
\subsection{The convergence behavior of intermediate approximations}\label{ConvSubsec}
Assuming no breakdown, the $\ast$-Lanczos algorithm in conjunction with the path-sum method converges to the solution $\boldsymbol{w}^H\mathsf{U}(t',t)\boldsymbol{v}$ in $N$ iterations, with $N$ the size of $\mathsf{A}$; see Eq.~\eqref{eq:exact}.
 Most importantly, intermediate $\ast$-Lanczos iterations provide a sequence of approximations $\int_t^{t'} \mathsf{R}_{\ast}(\mathsf{T}_n)_{1,1}(\tau,t)\, \text{d}\tau$, $n=1,\dots,N$, whose convergence behavior we analyze hereafter.
 
As we have already discussed before,
under the assumption that all entries of $\mathsf{A}$ are smooth over $I$ and that all the $\beta_j^{(1,0)}(t,t)\neq 0$, for every $t \in I$, all the $\alpha_j$ and $\beta_j$ distributions are elements of $\Sm$. The proof of this statement is very long and  technical and serves only to establish the theoretical \emph{feasibility} of tridigonalization for systems of coupled linear differential equations with variable coefficients using  smooth functions. It was therefore presented in a separate work. We refer the reader to \cite{GisPoz21} for a full exposition and proof, while here we only state some of the main results:
\begin{theorem}[P.-L. G. and S. P. \cite{GisPoz21}]\label{conj:coeff}
Let $\mathsf{A}(t',t)= \widetilde{\mathsf{A}}(t')\Theta(t'-t)$ be an $N\times N$ matrix composed of elements from $\Sm$. Let $\alpha_{n-1}$ and $\beta_n$ be the coefficients generated by Algorithm~\ref{algo:*lan} running on $\mathsf{A}$ and the time-independent vectors $\boldsymbol{w}, \boldsymbol{v}$ ($\boldsymbol{w}^H \boldsymbol{v}=1$). For any $1 \leq n \leq N$, assuming that $\beta_j^{(1,0)}(t,t) \neq 0$ for every $t \in I$,  $j=1,\dots,n-1$, we get $\beta_n(t',t), \alpha_{n-1}(t',t) \in \Sm$.
In addition, all required $\ast$-inverses $\beta_{j}^{\ast -1}$ exist and are of the form
$$
\beta_{j}^{\ast -1} = b_j^L(t',t) \ast \delta^{(3)}(t'-t),
$$
with $b_j^{L}\in \Sm$.
\end{theorem}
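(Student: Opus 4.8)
The plan is to establish both assertions together, by induction on $n$, by propagating a precise \emph{normal form} for every quantity that Algorithm~\ref{algo:*lan} produces. The difficulty is that the vectors $\boldsymbol{v}_n=\widehat{\boldsymbol{v}}_n\ast\beta_n^{\ast-1}$ and $\boldsymbol{w}_n^H$ are \emph{not} in $\Sm$: the inverses $\beta_n^{\ast-1}$ carry Dirac-delta derivatives, so the right inductive hypothesis is not simply ``$\alpha_{n-1},\beta_n\in\Sm$'' but rather that each $\boldsymbol{v}_n,\boldsymbol{w}_n^H$ has the shape $\sum_i \boldsymbol{c}_{n,i}(t',t)\,\delta^{(i)}(t'-t)+\boldsymbol{r}_n(t',t)\,\Theta(t'-t)$ with smooth coefficients and with a top delta-order that stays \emph{uniformly bounded} in $n$. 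The two statements of the theorem are then the parts of this invariant that survive the contractions \eqref{eq:lanc:coeff} defining $\alpha_{n-1}$ and $\beta_n$.

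First I would record the ``$\ast$-calculus'' that drives every step. From \eqref{eq:def:*} and integration by parts one gets $\mathsf{A}\ast\delta^{(i)}=\widetilde{\mathsf{A}}(t')\,\delta^{(i-1)}(t'-t)$ for $i\geq1$, so left multiplication by $\mathsf{A}$ \emph{lowers} the delta-order by one (and sends the $\delta$-part to a $\Theta$-part); dually, for $f\in\Sm$ the product $f\ast\delta^{(i)}$ yields a $\Sm$-term together with delta-terms whose coefficients are fixed by the diagonal values $\widetilde f(t,t)$ and their derivatives. Feeding these rules into the three-term recurrences \eqref{Recurrences} expresses the delta-content of $\boldsymbol{v}_n,\boldsymbol{w}_n^H$ in terms of that of $\boldsymbol{v}_{n-1},\boldsymbol{v}_{n-2}$ and of $\beta_n^{\ast-1}$, and lets me check that the assumed normal form reproduces itself; the essential point to monitor is that the delta-order \emph{raised} by $\beta_n^{\ast-1}$ is compensated by the delta-order \emph{lowered} by $\mathsf{A}$, so that no runaway growth occurs.

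The $\delta^{(3)}$ shape of the inverse I would read off from the behaviour of $\beta_n$ on the diagonal $t'=t$, combined with the inversion results of \cite{GisPozInv19}. The mechanism is that $\Theta^{\ast(k+1)}$ has $\ast$-inverse $\delta^{(k+1)}$, so a nonzero $\beta\in\Sm$ whose coefficient vanishes to order $k$ along $t'=t$ has leading diagonal singularity matching that of $\Theta^{\ast(k+1)}$ and hence a $\ast$-inverse of the form $\delta^{(k+1)}\ast(\text{smooth})$, and symmetrically $(\text{smooth})\ast\delta^{(k+1)}$. Since any $\ast$-product of $\Sm$-elements vanishes on the diagonal, every $\beta_n$ vanishes there; the hypothesis $\beta_n\not\equiv0$ makes the vanishing order finite, and its \emph{leading} diagonal coefficient turns out to be precisely the $\beta$-coefficient of the ordinary non-Hermitian Lanczos run on $\widetilde{\mathsf{A}}(t),\boldsymbol{w},\boldsymbol{v}$ — which is exactly where non-breakdown of the classical algorithm at a frozen time enters. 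Bounding this order by two and then padding the lower cases through $\delta^{(2)}\ast c=\delta^{(3)}\ast(\Theta\ast c)$, with $\Theta\ast c\in\Sm$, gives the uniform representation
$$\beta_n^{\ast-1}=\delta^{(3)}\ast b_n^R=b_n^L\ast\delta^{(3)},\qquad b_n^R,b_n^L\in\Sm,$$
claimed by the theorem.

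The hard part, and the reason the complete proof is long, is the inductive step for $\Sm$-\emph{membership} of $\alpha_{n-1}$ and $\beta_n$. A priori the contractions $\boldsymbol{w}_{n-1}^H\ast\mathsf{A}\ast\boldsymbol{v}_{n-1}$ and $\boldsymbol{w}_n^H\ast\mathsf{A}\ast\boldsymbol{v}_{n-1}$ inherit all the delta-derivative terms carried by the vectors, and one must prove these cancel \emph{exactly}, leaving a pure $\Theta$-distribution. I expect the cancellation to be forced by the $\ast$-biorthonormality \eqref{eq:v:orth:cond}: it constrains the delta-parts of $\boldsymbol{w}_n^H$ and $\boldsymbol{v}_n$ to be mutually dual through $\mathsf{A}$, so that the order-lowering action of $\mathsf{A}$ on the delta-content of $\boldsymbol{v}_{n-1}$ is annihilated against the delta-content of $\boldsymbol{w}_n^H$. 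Turning this heuristic into an exact, order-by-order bookkeeping through \eqref{Recurrences} and \eqref{eq:lanc:coeff} — simultaneously controlling which $\delta^{(i)}$ appear, proving they disappear in $\alpha_{n-1},\beta_n$, and confirming the delta-orders never exceed the values needed for the $\delta^{(3)}$ normal form — is the technical heart of the argument, and the reason it is carried out in full only in \cite{giscardpozza2020}.
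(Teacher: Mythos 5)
A preliminary remark: the paper contains no proof of Theorem~\ref{conj:coeff} to compare against --- the theorem is imported from the companion work \cite{giscardpozza2020}, and the text says explicitly that the proof is given only there. Your attempt must therefore stand on its own, and it does not: its central inductive invariant is false. You require the top delta-order of the vectors to stay \emph{uniformly bounded} in $n$, claiming that the order raised by $\beta_n^{\ast-1}$ is compensated by the order lowered by $\mathsf{A}$. The arithmetic does not balance: left $\ast$-multiplication by $\mathsf{A}$ lowers the delta-order by one, while right $\ast$-multiplication by $\beta_n^{\ast-1}=b_n^L\ast\delta^{(3)}$ raises it by a net two, so each iteration gains roughly one order. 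The paper's own Example~\ref{Ex2} exhibits exactly this: the columns of $\mathsf{V}_5$ carry $\delta'$, $\delta''$, $\delta^{(3)}$, $\delta^{(4)}$, i.e.\ $\boldsymbol{v}_n\sim\delta^{(n)}$ with order growing linearly. An induction whose hypothesis caps the delta-order cannot close.

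The cancellation mechanism you invoke is also misidentified. For $n\geq 1$ the recurrences \eqref{Recurrences} produce $\boldsymbol{w}_n^H$ with \emph{no} delta content at all: once the earlier coefficients are (inductively) in $\Sm$, every term in the $\boldsymbol{w}$-recurrence is an $\ast$-product of elements of $\Sm$, which is closed under $\ast$. So the ``delta-parts of $\boldsymbol{w}_n^H$'' that you expect to annihilate the delta content of $\mathsf{A}\ast\boldsymbol{v}_{n-1}$ do not exist. What actually forces $\alpha_{n-1},\beta_n\in\Sm$ is a quantity your normal form never records: the smooth coefficients $\widetilde{\boldsymbol{w}}_n$ vanish on the diagonal $t'=t$ to an order that \emph{grows} with $n$ (order $n-1$ in Example~\ref{Ex2}), and it is this growing flatness, matched against the growing delta-order of $\mathsf{A}\ast\boldsymbol{v}_{n-1}$, that kills the boundary (delta) terms arising in the contractions \eqref{eq:lanc:coeff}. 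A workable induction must propagate these two growing, mutually compensating orders; yours bounds both. There is additionally an off-by-one error in your inverse normal form: if $\tilde\beta$ vanishes to order $k$ on the diagonal, then $\beta^{\ast-1}$ carries delta content of order $k+1$, and since $\delta^{(m)}\ast b$ with $b\in\Sm$ has delta content of order at most $m-1$, the correct representation is $\delta^{(k+2)}\ast(\Sm)$, not $\delta^{(k+1)}\ast(\Sm)$ (check: $\beta_1=2(t'-t)\Theta$ has $k=1$ and $\beta_1^{\ast-1}=\tfrac12\delta''=\delta^{(3)}\ast\tfrac12\Theta$). Hence the theorem's $\delta^{(3)}$ form requires the diagonal vanishing order of $\beta_n$ to be at most \emph{one}, not two, and you prove neither bound --- it is asserted (``turns out to be''). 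Finally, the exact order-by-order cancellation, which \emph{is} the content of the claim $\alpha_{n-1},\beta_n\in\Sm$, is explicitly deferred in your last paragraph to \cite{giscardpozza2020}; what remains is a strategy outline with some correct $\ast$-calculus rules, not a proof.
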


All $b_j^{L}$ have explicit expansions in terms of $\beta_j$ which are given in \cite{GisPoz21} but not reproduced here owing to length concerns. When $\beta_n \not\equiv 0$, but $\beta_n^{(1,0)}(\rho,\rho)=0$ for some $\rho \in I$, the results of Theorem \ref{conj:coeff} hold if we restrict the intial interval $I$ to a subinteval $J \subset I$ so that $\rho \notin J$. \\[-.5em]

Thanks to these regularity results, we can establish the following bound for the approximation error:
\begin{prop}\label{BoundConvergence}
Let us consider the setting and assumptions of Theorem \ref{conj:coeff}. Moreover, let $\mathsf{U}$ designate the time-ordered exponential of $\mathsf{A}$ and let $\mathsf{T}_n$ be the tridiagonal matrix \eqref{eq:tridiag} such that 
$$
\boldsymbol{w}^H\mathsf{A}^{*j}\boldsymbol{v}= (\mathsf{T}_n^{*j})_{1,1}, \quad \text{ for } \quad j=0,\dots, 2n-1.
$$
Then, for $t'\geq t$, with $t', t \in I$,
$$
\left\lvert\boldsymbol{w}^H\mathsf{U}(t',t)\boldsymbol{v}-\int_t^{t'} \mathsf{R}_{\ast}(\mathsf{T}_n)_{1,1}(\tau,t)\, \text{d}\tau\right\rvert\leq \frac{C^{2n}+D_n^{2n}}{(2n)!}(t'-t)^{2n}e^{(C+D_n)(t'-t)}.$$
Here 
$$C:=\sup_{t'\in I}\|{\mathsf{A}}(t')\|_{\infty}, \quad D_n:=3 \, \sup_{t',t\in I^2}\max_{0\leq j\leq n-1}\big\{\vert{\alpha}_j(t',t)\vert,\vert{\beta}_j(t',t)\vert\big\}$$
are both finite, with $\| \cdot \|_\infty$ the matrix norm induced by the uniform norm.. 
\end{prop}

\begin{proof}
Assume $t' \geq t$.
Observe that
\begin{align*}
\boldsymbol{w}^H\mathsf{U}(t',t)\boldsymbol{v}-\int_t^{t'} \!\!\mathsf{R}_{\ast}(\mathsf{T}_n)(\tau,t)_{11}\, \text{d}\tau=\int_t^{t'}\!\!\sum_{j=2n}^{\infty}\boldsymbol{w}^H\mathsf{A}^{\ast j}(\tau,t)\boldsymbol{v}-(\mathsf{T}_n^{\ast j})_{1,1}(\tau,t) \, \text{d}\tau,
\end{align*}
so that 
\begin{align*}
\left\lvert\boldsymbol{w}^H\mathsf{U}(t',t)\boldsymbol{v}-\!\int_t^{t'} \!\!\mathsf{R}_{\ast}(\mathsf{T}_n)(\tau,t)_{11}\,\text{d}\tau\right\rvert&\leq\int_t^{t'}\!\!\sum_{j=2n}^{\infty}\left\lvert\boldsymbol{w}^H\mathsf{A}^{\ast j}(\tau,t)\boldsymbol{v}\right\rvert+\left\lvert(\mathsf{T}_n^{\ast j})_{1,1}(\tau,t)\right\rvert\, \text{d}\tau.
\end{align*}
Now $\sup_{t'\in I}\vert\boldsymbol{w}^H{\mathsf{A}}(t')\boldsymbol{v}\vert\leq C$ 
and  
$$\left\lvert\int_t^{t'}\boldsymbol{w}^H\mathsf{A}^{\ast j}(\tau,t)\boldsymbol{v}\, \text{d}\tau\right\rvert\leq \Theta(t'-t) \ast \big(C\Theta(t'-t)\big)^{\ast j} = C^{j} \frac{(t'-t)^{j}}{j!}.$$ 
We proceed similarly for the terms involving $\mathsf{T}_n$.
Theorem \ref{conj:coeff} implies the existence of $\widehat{D}_n := \sup_{t',t\in I^2}\max_{0\leq j\leq n-1}\big\{\vert{\alpha}_j(t',t)\vert,\vert{\beta}_j(t',t)\vert\big\} < +\infty$. The matrix element $(\mathsf{T}_n^{\ast j})_{1,1}$ is given by the sum of $\ast$-products of coefficients $\alpha_i, \beta_i$ and $1_*$. Replacing all the factors in those $\ast$-products with $\widehat{D}_n \Theta(t'-t)$ gives an upper bound for $(\mathsf{T}_n^{\ast j})_{1,1}$. Hence we get
$$\left\lvert\left(\mathsf{T}_n^{\ast j}\right)_{1,1}\right\rvert \leq \left(\left(\widehat{D}_n \mathsf{P}_n\Theta(t'-t)\right)^{\ast j} \right)_{1,1} 
\leq \widehat{D}_n^j \|\mathsf{P}_n\|_{\infty}^j \Theta(t'-t)^{\ast j}, $$ 
where $\mathsf{P}_n$ is the $n\times n$ tridiagonal matrix whose nonzero entries are equal to $1$.
Note that $\|\mathsf{P}_n\|_{\infty}=3$.
Hence the error can be bounded by
\begin{align*}
\sum_{j=2n}^\infty \left(C^{j}+ D_n^j\right) \frac{(t'-t)^{j}}{j!} 
&\leq \frac{(C^{2n}+ D_n^{2n})}{2n!}(t'-t)^{2n} \sum_{j=0}^\infty \frac{2n!}{({2n+j})!} \left(C^{j}+ D_n^j\right) (t'-t)^{j}, \\
&\leq \frac{(C^{2n}+ D_n^{2n})}{2n!}(t'-t)^{2n} \sum_{j=0}^\infty  \frac{(C+ D_n)^j(t'-t)^{j}}{j!}, \\
&\leq \frac{(C^{2n}+ D_n^{2n})}{2n!}(t'-t)^{2n} e^{(C+D_n)(t'-t)},
\end{align*}
concluding the proof. %\qed
\end{proof}

Analogously to the classical non-Hermitian Lanczos algorithm, we need to further assume $D_n$ to be not too large for $n\geq 1$ in order to get a meaningful bound. Such an assumption can be verified a-posteriori. 
The bound of Proposition~\ref{BoundConvergence} 
demonstrates that under reasonable assumptions, the approximation error has a super-linear decay. 
Assuming no breakdown, we also recall that the algorithm does necessarily converge in at most $N$ steps, independently of the error bound.
The  computational cost of the algorithm is discussed separately in Section~\ref{SecNumerical}.

\subsection{Breakdown}\label{subsec:breakdow}
In the classical non-Hermitian Lanczos algorithm, a breakdown appears either when an invariant Krylov subspace is produced (\emph{lucky breakdown}) or when the last vectors of the biorthogonal bases $\boldsymbol{v}_n, \boldsymbol{w}_n$ are nonzero, but $\boldsymbol{w}_n^H\boldsymbol{v}_n = 0$ (\emph{serious breakdown}); for further details refer, e.g., to \cite{Tay82,ParTayLiu85,Par92,Gut92,Freund1993,Gut94b}.
Analogously, in the $*$-Lanczos algorithm \ref{algo:*lan}, a \emph{lucky breakdown} arises when either $\boldsymbol{w}_n \equiv 0$ or $\widehat{\boldsymbol{v}}_n \equiv 0$.
In such a case, the algorithm has converged to the solution, as the following proposition shows.
\begin{prop}
  Assume that the $\ast$-Lanczos algorithm in Table \ref{algo:*lan} does not breakdown until the $n$th step when a lucky breakdown arises, i.e., $\widehat{\boldsymbol{v}}_n \equiv 0$ (or $\boldsymbol{w}_n \equiv 0$). Then
  \begin{align*}
        &\boldsymbol{w}^H (\mathsf{A}^{*j})\, \boldsymbol{v} = \boldsymbol{e}_1^H (\mathsf{T}_n^{*j})\,\boldsymbol{e}_1, \quad \text{ for } j\geq 0,\\
&\boldsymbol{w}^H\mathsf{U}(t',t)\boldsymbol{v} = \int_0^t\mathsf{R}_\ast(\mathsf{T}_n)_{1,1}(\tau,t)\,\text{d}\tau.
\end{align*}
\end{prop}
\begin{proof}
We prove it for $\widehat{\boldsymbol{v}}_n \equiv 0$. The case $\boldsymbol{w}_n \equiv 0$ follows similarly.
By the results in Subsection \ref{subsec:OP}, there exists a $\ast$-polynomial $\widehat{p}_n(\lambda) = \sum_{j=0}^n\lambda^{\ast j} \ast \gamma_j$, so that $\widehat{p}_n(\mathsf{A}) \boldsymbol{v} = \widehat{\boldsymbol{v}}_n \equiv 0$.
Therefore
\begin{equation*}
\mathsf{A}^{*n}\,\boldsymbol{v} = - \sum_{j=0}^{n-1}\mathsf{A}^{\ast j}\,\boldsymbol{v} \ast \gamma_j.
\end{equation*}
Hence, in general, for every $k\geq n$ there exist a $\ast$-polynomial $r_{n-1}$ of degree $n-1$ so that $\mathsf{A}^{\ast k} \, \boldsymbol{v} = r_{n-1}(\mathsf{A})\, \boldsymbol{v}$.

By Theorem \ref{thm:mmp} and using the notation of Subsection \ref{subsec:OP}, we get
$$ \boldsymbol{e}_1^H q_j^D(\mathsf{T}_n) \ast \widehat{p}_n(\mathsf{T}_n) \, \boldsymbol{e}_1 = \boldsymbol{w}^H q_j^D(\mathsf{A}) \ast \widehat{p}_n(\mathsf{A})\, \boldsymbol{v} = 0, \quad j=0,\dots,n-1. $$
As shown in the proof of Lemma \ref{lemma:op:basis}, $\boldsymbol{e}_1^H q_j^D(\mathsf{T}_n) = 1_* \boldsymbol{e}_{j+1}^H$, for $j=0,\dots,n-1$. Therefore we get $\widehat{p}_n(\mathsf{T}_n)\, \boldsymbol{e}_1 = 0$. As a consequence, for every $k \geq n$ we get
\begin{equation*}
\left(\mathsf{T}_n\right)^{*n}\,\boldsymbol{e}_1 = - \sum_{j=0}^{n-1}\left(\mathsf{T}_n\right)^{\ast j}\,\boldsymbol{e}_1 \ast \gamma_j,     
\end{equation*}
and thus $\left(\mathsf{T}_n\right)^{*k}\,\boldsymbol{e}_1 = r_{n-1}\left(\mathsf{T}_n\right) \, \boldsymbol{e}_1$.
Since $r_{n-1}$ has degree $n-1$, Theorem \ref{thm:mmp} implies
$$ \boldsymbol{w}^H \mathsf{A}^{*k}\,\boldsymbol{v} = \boldsymbol{w}^H r_{n-1}(\mathsf{A})\, \boldsymbol{v} = \boldsymbol{e}_1^H r_{n-1}(\mathsf{T}_n)\, \boldsymbol{e}_1 = \boldsymbol{e}_1^H \left(\mathsf{T}_n\right)^{*k}\,\boldsymbol{e}_1, $$
concluding the proof. %\qed
\end{proof}

% More in general, assuming $\mathsf{A}$ composed of elements from $\Sm$, by Theorem \ref{conj:coeff} a breakdown in Algorithm \ref{algo:*lan} arises if and only if $\beta_n \equiv 0$. As we have mentioned before, in \cite{GisPoz21} we showed that a breakdown in the $\ast$-Lanczos algorithm can be related to the breakdown of the non-Hermitian Lanczos algorithm. These results are summarized in the following Lemma:
% \begin{lem}[P.-L. G. and S. P. \cite{GisPoz21}]\label{lemma:breakdown}
%    Let $\mathsf{T}_n$ be the tridiagonal matrix \eqref{eq:tridiag} obtained by $n$ iterations of Algorithm \ref{algo:*lan} with inputs $\mathsf{A}(t',t)=\widetilde{\mathsf{A}}(t')\Theta(t'-t)$, a matrix with components from $\Sm$, and $\boldsymbol{w},\boldsymbol{v}$ time-independent vectors with $\boldsymbol{w}^H\boldsymbol{v}=1$.
%    Then there exists at least one $\rho \in I$ so that the usual non-Hermitian Lanczos algorithm with inputs $\widetilde{\mathsf{A}}(\rho), \boldsymbol{w}, \boldsymbol{v}$ has no serious breakdown in the first $n-1$ iterations.
% \end{lem}

% The Lemma indicates that serious breakdowns in the $\ast$-Lanczos procedures are even rarer than in the usual non-Hermitian Lanczos. Moreover, it shows that they are not related to the $\ast$-product framework, but they are inherited from the Krylov subspace strategy. 

The $\ast$-Lanczos algorithm construction and its polynomial interpretation in Subsection \ref{subsec:OP} suggest that it may be possible to deal with the serious breakdown issue by a look-ahead strategy analogous to the one for the non-Hermitian Lanczos algorithm; see, e.g., \cite{Tay82,ParTayLiu85,BreRedSad91,Brezinski1992,Par92,Gut92,Freund1993,Gut94b,PozPra19}. 
Nevertheless, we need to discuss the particular case of serious breakdowns arising when $\boldsymbol{w}= \boldsymbol{e}_i$ and $\boldsymbol{v}=\boldsymbol{e}_j$. If $i\neq j$, then $\boldsymbol{w}^H\boldsymbol{v} = 0$, which does not satisfy the $\ast$-Lanczos assumption. Moreover, if $i=j$ and $\mathsf{A}$ is a sparse non-Hermitian matrix, then it may be possible that $\mathsf{A}_{ii} \equiv0$ and $\mathsf{A^{*2}}_{ii} \equiv 0$.
As a consequence, we get $\beta_1 \equiv 0$. We can try to fix these problems rewriting the approximation of the time-ordered exponential $\mathsf{U}$ as
$$\boldsymbol{e}_i^H\mathsf{U}\boldsymbol{e}_j = (\boldsymbol{e} + \boldsymbol{e}_i)^H \mathsf{U} \boldsymbol{e}_j - \boldsymbol{e}^H \mathsf{U} \boldsymbol{e}_j,
$$
with $\boldsymbol{e} = (1, \dots, 1)^H$. Then one can approximate $(\boldsymbol{e} + \boldsymbol{e}_i)^H \mathsf{U} \, \boldsymbol{e}_j$ and $\boldsymbol{e}^H \mathsf{U} \, \boldsymbol{e}_j$ separately, which are less likely going to have a breakdown, thanks to the fact that $\boldsymbol{e}$ is a full vector; see, e.g., \cite[Section 7.3]{GolMeuBook10}.

\section{Examples}\label{Examples}
In this section, we use the $\ast$-Lanczos algorithm~\ref{algo:*lan} on examples in ascending order of difficulty. All the computations have been performed using \textsc{Mathematica} 11.

\begin{exm}[Ordinary matrix exponential]
Let us first consider a constant matrix 
\begin{equation*}
\mathsf{A}=
\begin{pmatrix}
-1 & 1 & 1 \\
 1 & 0 & 1 \\
 1 & 1 & -1 
 \end{pmatrix}.
\end{equation*}
Because $\mathsf{A}$ commutes with itself at all times, its time-ordered exponential coincides with its ordinary exponential, $\mathcal{T}e^{\int \mathsf{A}(\tau)\, \text{d}\tau}\equiv e^{\mathsf{A}(t')}$ (we set $t=0$). Note that the matrix chosen here is symmetric only to lead to concise expressions suitable for presentation in an article, e.g.,
\begin{equation}\label{A11}
\big(e^{\mathsf{A}t'}\big)_{11}=-\frac{1}{2} \sinh (2 t')+\frac{1}{2} \cosh (2 t')+\frac{1}{2} \cosh \left(\sqrt{2}
   t'\right),
\end{equation} 
and that such symmetries are not a requirement of the $\ast$-Lanczos approach. 

Now let us find the result of Eq.~(\ref{A11}) with  Algorithm~\ref{algo:*lan}. We define $\boldsymbol{w}:=\boldsymbol{v}^H:=(1,0,0)$, $\boldsymbol{w}_0=\boldsymbol{w}1_\ast$, $\boldsymbol{v}_0=\boldsymbol{v}1_\ast$, from which it follows that $\alpha_0(t',t) = -1\times\Theta(t'-t)$ and $\boldsymbol{w}_1=\widehat{\boldsymbol{v}}_1^H=(0,1,1)\Theta(t'-t)$. Furthermore, since $\mathsf{A}$ is a constant matrix times $\Theta(t'-t)$, we have $\mathsf{A}^{\ast n} = \tilde{\mathsf{A}}^n \times \Theta(t'-t)^{\ast n} = \tilde{\mathsf{A}}\times(t'-t)^{n-1}/(n-1)!\times \Theta(t'-t)$ and similarly  $\alpha_0^{\ast2}(t',t)=\tilde{\alpha}_0^2\times (t'-t)\Theta(t'-t)$. Thus 
\begin{align*}
&\beta_1=\boldsymbol{w}^H\mathsf{A}^2\boldsymbol{v}\times (t'-t)\Theta(t'-t)-\tilde{\alpha}_0^2(t'-t)\Theta(t'-t) = 2(t'-t)\Theta(t'-t).
\end{align*}
The $\ast$-inverse follows as $\beta^{\ast-1}=\frac{1}{2} \delta''$ \cite{GisPozInv19},
from which we get 
$$
\boldsymbol{v}_1 = \widehat{\boldsymbol{v}}_1\ast \beta_1^{\ast-1} = (0,1,1)^H\frac{1}{2}\delta'(t'-t),
$$
% In these expressions, $\delta'(t'-t)$ and $\delta''(t'-t)$ are the first and second Dirac delta derivatives, respectively. In general we employ the notation $\delta^{(j)}(t'-t)$ for the $j$th Dirac delta derivative, when $j>2$.
Now it follows that 
\begin{align*}
&\alpha_1(t',t)=\boldsymbol{w}_1\ast \mathsf{A}\ast \boldsymbol{v}_1=\frac{1}{2}\Theta(t-t'),\\
&\boldsymbol{w}_2(t',t)=\boldsymbol{w}_1\ast \mathsf{A}-\alpha_1\ast \boldsymbol{w}_1-\beta_1\ast \boldsymbol{w}_0=(0,1,-1)\frac{1}{2}(t'-t)\Theta(t'-t),\\
&\widehat{\boldsymbol{v}}_2(t',t)=\mathsf{A}\ast\boldsymbol{v}_1- \boldsymbol{v}_1\ast\alpha_1-\boldsymbol{v}_0=(0,1,-1)^H\frac{1}{4}\delta(t'-t),\\
&\beta_2=\boldsymbol{w}_2\ast\mathsf{A}\ast\boldsymbol{v}_1 = \frac{1}{4}(t'-t)\Theta(t'-t).
\end{align*}
Then $\beta_2^{\ast-1}=4 \delta''$ and so
\begin{align*}
&\boldsymbol{v}_2 = \widehat{\boldsymbol{v}}_2\ast\beta_2^{\ast-1}=(0,1,-1)^H\delta''(t'-t)
&\alpha_2=\boldsymbol{w}_2\ast \mathsf{A}\ast\boldsymbol{v}_2=-\frac{3}{2}\Theta(t'-t).
\end{align*}
At this point we have determined the $\ast$-Lanczos matrices $\mathsf{T}$, $\mathsf{V}$ and $\mathsf{W}$ entirely
\begin{equation*}
\mathsf{T}=\begin{pmatrix}
 -\Theta & \delta & 0 \\
 2 \Theta^{\ast2} & \frac{1}{2}\Theta & \delta \\
 0 & \frac{1}{4}\Theta^{\ast2} & -\frac{3}{2}\Theta \\
\end{pmatrix}, \quad
\mathsf{V}=\begin{pmatrix}
 \delta & 0 & 0 \\
 0 & \frac{1}{2} \delta ' & \delta '' \\
 0 & \frac{1}{2} \delta ' & -\delta '' 
\end{pmatrix}, \quad 
\mathsf{W}^{H}=\begin{pmatrix}
 \delta & 0 & 0 \\
 0 & \Theta & \Theta \\
 0 & \frac{1}{2}\Theta^{\ast2} & -\frac{1}{2}\Theta^{\ast2}
\end{pmatrix}.
\end{equation*}
In all of these expressions, $\Theta$ is a short-hand notation for $\Theta(t'-t)$ and $\delta$, $\delta'$ and $\delta''$ are to be  evaluated in $t'-t$.
It is now straightforward to verify the matching moment property $\big(\mathsf{T}^{\ast j}\big)_{11}=\big(\mathsf{A}^{\ast j}\big)_{11}$ for all $j\in\mathbb{N}$. We can also check directly that the time-ordered exponential of $\mathsf{A}$ is correctly determined from $\mathsf{T}$ using either the general formula of Eq.~(\ref{PSresult}) or, because the situation is so simple that all entries depend only on $t'-t$, we may use a Laplace transform with respect to $t'-t$. This gives $\mathsf{T}(s)$, and the inverse Laplace-transform of the resolvent $\big(\mathsf{I}-\mathsf{T}(s)\big)^{-1}_{11}$ is the desired quantity. Both procedures give the same result, namely the derivative of $e^{\mathsf{A}t}$ as it should \cite{Giscard2015}, i.e.,
$$
(\mathsf{Id}_\ast-\mathsf{T})^{\ast-1}_{11}(t',0) = \left(\sinh (2 t')+\frac{1}{\sqrt{2}}\sinh \left(\sqrt{2} t'\right)-\cosh (2 t')\right)\Theta(t'),
$$
which is indeed the derivative of Eq.~(\ref{A11}).
\end{exm}

\begin{exm}[Time-ordered exponential of a time-dependent matrix]\label{Ex2}
In this example, we consider the  $5\times 5$ time-dependent matrix $\mathsf{A}(t',t)=\tilde{\mathsf{A}}(t')\Theta(t'-t)$ with
\begin{align*}
\tilde{\mathsf{A}}(t') =
\left(
\begin{array}{ccccc}
 \cos (t') & 0 & 1 & 2 & 1 \\
 0 & \cos (t')-t' & 1-3 t' & t' & 0 \\
 0 & t' & 2 t'+\cos (t') & 0 & 0 \\
 0 & 1 & 2 t'+1 & t'+\cos (t') & t' \\
 t' & -t'-1 & -6 t'-1 & 1-2 t' & \cos (t')-2 t' \\
\end{array}
\right).
\end{align*}
The matrix $\tilde{\mathsf{A}}$ does not commute with itself at different times $\tilde{\mathsf{A}}(t')\tilde{\mathsf{A}}(t)-\tilde{\mathsf{A}}(t)\tilde{\mathsf{A}}(t')\neq 0$, and the corresponding differential system Eq.~(\ref{FundamentalSystem}) has no known analytical solution.
We use Algorithm~\ref{algo:*lan} to determine the tridiagonal matching moment matrix $\mathsf{T}$ such that $\big(\mathsf{A}^{\ast j}\big)_{11}=\big(\mathsf{T}^{\ast j}\big)_{11}$ for $j \in \mathbb{N}$. We define $\boldsymbol{w}:=\boldsymbol{v}^H:=(1,0,0,0,0)$, $\boldsymbol{w}_0=\boldsymbol{w}1_\ast$, $\boldsymbol{v}_0=\boldsymbol{v}1_\ast$, from which it follows that  
\begin{align*}
\alpha_0(t',t) &= \cos(t')\Theta(t'-t),\\
\boldsymbol{w}_1&=\big(0,0,1,2,1\big)\Theta(t'-t),\\
\widehat{\boldsymbol{v}}_1&=\big(0,0,0,0,t')^H\Theta(t'-t),\\
\beta_1(t',t)&=\frac{1}{2} \left(t'^2-t^2\right)\Theta(t'-t).
\end{align*} 
Observing that $\beta_1 = \Theta(t'-t)\ast t'\Theta(t'-t)$, we get $\beta_1^{*-1} = \frac{1}{t} \delta'(t'-t)\ast \delta'(t'-t)=-\frac{1}{t^2}\delta'(t'-t)+\frac{1}{t}\delta''(t'-t)$, so that
$$
\boldsymbol{v}_1=\widehat{\boldsymbol{v}}_1\ast\beta_1^{\ast-1}=\big(0,\,0,\,0,\,0,\,1\big)^H\delta'(t'-t),
$$
which terminates the initialization phase of the Algorithm. We proceed with
\begin{align*}
\alpha_1(t',t)&=\boldsymbol{w}_1\ast\mathsf{A}\ast\boldsymbol{v}_1 = \cos(t)\Theta(t'-t),\\
   \boldsymbol{w}_2&=\boldsymbol{w}_1\ast\mathsf{A}-\alpha_1\ast \boldsymbol{w}_1-\beta_1\ast \boldsymbol{w}_0,\\
   &=\big(0,\,t'-t,\,t'-t,\,t'-t,\, 0\big)\Theta(t'-t),\\
   \widehat{\boldsymbol{v}}_2&=\mathsf{A}\ast\boldsymbol{v}_1-\boldsymbol{v}_1\ast\alpha_1 -\boldsymbol{v}_0= \big(0,\,0,\,0,\,t,\,-2t\big)^H\delta(t'-t),\\
   \beta_2&=\boldsymbol{w}_2\ast \mathsf{A}\ast \boldsymbol{v}_1 =t (t'-t)\Theta(t'-t).
\end{align*}
As we did for $\beta_1$, we factorize $\beta_2 = \Theta(t'-t) \ast t\, \Theta(t'-t)$ so that its $\ast$-inverse is $\beta_2^{*-1} = \frac{1}{t'} \delta'(t'-t)\ast \delta'(t'-t)=\frac{1}{t'}\delta''$.  Then
$$
\boldsymbol{v}_2 = \big(0,\,0,\,0,\,1,\,-2\big)^H\delta''(t'-t).
$$
Continuing in this fashion yields the tridiagonal output matrix $\mathsf{T}_5\equiv \mathsf{T}$,
$$\mathsf{T}\!\!=\!\!\begin{pmatrix}
 \cos (t') \Theta & \delta & 0 & 0 & 0 \\
 \frac{1}{2}(t'^2\!-\!t^2) \Theta & \cos (t) \Theta & \delta & 0 & 0 \\
 0 & t (t'\!-\!t) \Theta & \widetilde{\alpha}_2(t',t) \Theta & \delta & 0 \\
 0 & 0 & -\frac{1}{2}(3 t^2\!-\!4 t t'\!+\!t'^2) \Theta & \widetilde{\alpha}_3(t',t) \Theta & \delta \\
 0 & 0 & 0 & (-2 t^2 \!+\! 3 t t' \!-\! t'^2) \Theta & \widetilde{\alpha}_4(t',t) \Theta  \\
\end{pmatrix}\!\!,
$$
with
\begin{align*}
\widetilde{\alpha}_2(t',t) & = (t'-t) \sin (t)+\cos (t), \\
\widetilde{\alpha}_3(t',t) & =\frac{1}{2} \left(4 (t'-t) \sin(t)-\left((t-t')^2-2\right)\right)\cos(t), \\
\widetilde{\alpha}_4(t',t) &=\frac{1}{6} \left(\left((t-t')^2-18\right) (t-t') \sin (t)+\left(6-9 (t-t')^2\right) \cos (t)\right),
\end{align*}
and the bases matrices
$$
\mathsf{V}_5=
\left(
\begin{array}{ccccc}
 \delta & 0 & 0 & 0 & 0 \\
 0 & 0 & 0 & \delta ^{(3)} & -2 \delta ^{(4)} \\
 0 & 0 & 0 & 0 & \delta ^{(4)} \\
 0 & 0 & \delta ''& -\delta ^{(3)} & \delta ^{(4)} \\
 0 & \delta ' & -2 \delta ''& 2 \delta ^{(3)} & -3 \delta ^{(4)} \\
\end{array}
\right), \quad 
\mathsf{W}_5^H =
\left(
\begin{array}{ccccc}
 \delta & 0 & 0 & 0 & 0 \\
 0 & 0 & \Theta & 2 \Theta & \Theta \\
 0 & \Theta^{\ast2} & \Theta^{\ast2} & \Theta^{\ast2} & 0 \\
 0 & \Theta^{\ast3} & 2\Theta^{\ast3} & 0 & 0 \\
 0 & 0 & \Theta^{\ast4} & 0 & 0 \\
\end{array}
\right).
$$
In all of these expressions, $\Theta$ and $\delta^{(n)}$ are short-hand notations respectively for $\Theta(t'-t)$ and $\delta^{(n)}(t'-t)$. 
All the required $\beta_j^{*-1}$ were calculated using the strategies described in \cite{GisPozInv19}, getting the factorized $*$-inverses
\begin{align*}
\beta_3^{*-1} = \frac{1}{t}\Theta(t'-t) \ast \delta^{(3)}(t'-t), \quad
\beta_4^{*-1} = \frac{t'}{t^2} \Theta(t'-t) \ast \delta^{(3)}(t'-t).
\end{align*}
We have also verified that $\big(\mathsf{A}^{\ast j}\big)_{11}=\big(\mathsf{T}^{\ast j}\big)_{11}$ holds for $j$ up to 9. The $\ast$-resolvent of $\mathsf{T}$ has no closed-form expression, its Neumann series likely converging to a hitherto undefined special function. Ultimately, such difficulties are connected with the propensity of  systems of coupled linear ordinary differential equations with non-constant coefficients  to produce transcendent solutions.
\end{exm}

\section{Outlook: Numerical implementation}\label{SecNumerical}
We do not expect closed-forms to exist in most cases for the entries of time-ordered matrix exponentials as these can involve complicated special functions \cite{Xie2010}. Also, very large matrices $\mathsf{A}(t')$ are to be treatable by the algorithm for it to be relevant to most applications. 
For these reasons, it is fundamental to implement the $\ast$-Lanczos algorithm numerically, e.g., using time discretization approximations. 

As shown in \cite{Giscard2015}, there exists an isometry $\Phi$ between the algebra of distributions of $\D$ equipped with the $\ast$-product and the algebra of \emph{time-continuous} operators (for which the time variables $t'$ and $t$ serve as line and row indices). Consider, for simplicity, a discretization of the interval $I$ with constant time step $\Delta t$; then these operators become ordinary matrices.
Specifically, given $f, g \in \Sm$, their discretization counterparts are the \textit{lower triangular} matrices $\mathsf{F}, \mathsf{G}$. Moreover, the function $f \ast g$ corresponds to the matrix $\mathsf{F} \, \mathsf{G} \, \Delta t$, with the usual matrix product. 
In other terms, the isometry $\Phi$ followed by a time discretization sends the $\ast$-product to the ordinary matrix product times $\Delta t$. Similarly, the Dirac delta distribution is sent to the identity matrix times $1/(\Delta t)$, the $k$th Dirac delta derivative $\delta^{(k)}$ is sent to the finite difference matrix
$$
\big(\mathsf{M}_{\delta^{(k)}}\big)_{ij}=\frac{1}{(\Delta t)^{k+1}}\begin{cases}
(-1)^{i-j}\binom{k}{i-j},&\text{if }i\geq j\\
0,&\text{else}
\end{cases},
$$
and $\Theta$ is sent to the matrix
$
(\mathsf{M}_\Theta)_{ij}=1
$
if $i\geq j$ and 0 otherwise. 
Most importantly, in this picture, the $\ast$-inverse of a function $f(t',t)\in D(I)$ is given as $\mathsf{F}^{-1}/(\Delta t)^2$, with $\mathsf{F}$ the lower triangular matrix corresponding to $f$.
Moreover, the time-discretized version of the path-sum formulation Eq.~(\ref{PSresult}) only involves ordinary matrix resolvents. At the same time, the final integration of $\mathsf{R}_\ast(\mathsf{T}_n)_{11}$ yielding $\boldsymbol{w}^H\mathsf{U}\,\boldsymbol{v}$ becomes a left multiplication by $\mathsf{M}_\Theta$. Therefore, a numerical implementation of the time-discretized $\ast$-Lanczos algorithm only requires \emph{ordinary operations on triangular matrices}.

We can now meaningfully evaluate the numerical cost of the time-discretized version of the algorithm. Let $N_t$ be the number of time subintervals in the discretization of $I$ for both the $t$ and $t'$ time variables. Then time-discretized  $\ast$-multiplications or $\ast$-inversions cost $O(N_t^3)$ operations. 
Considering a sparse time-dependent matrix $\mathsf{A}(t)$ with $N_{nz}$ nonzero elements, the $\ast$-Lanczos algorithm therefore necessitates $O(N_i\times N_t^3\times N_{nz})$ operations to obtain the desired $\boldsymbol{w}^H\mathsf{U}\, \boldsymbol{v}$. 
Here $N_i$ is the number of iterations needed to get an error lower than a given tolerance. Unfortunately, as well-explained in \cite{LieStrBook13}, the presence of computational errors can slow down the (usual) Lanczos algorithm convergence. Hence, in general, we cannot assume $N_i \approx N$ since the $\ast$-Lanczos algorithm could analogously require more iterations. However, in many cases, the (usual) Lanczos algorithm demands \emph{few} iterations to reach the tolerance also in finite precision arithmetic. We expect the $\ast$-Lanczos algorithm to behave analogously, giving $N_i \ll N$ in many cases.  
Concerning $N_t$, there is no reason to expect that is would depend on $N$ since $N_t$ controls the quality  of individual generalized functions. 
We also remark that the $\ast$-Lanczos algorithm can exploit the sparsity structure of the matrix $\mathsf{A}$, making it inherently competitive when dealing with large sparse matrices that are typical of applications.

The classical numerical methods (e.g., Runge--Kutta methods) for the approximation of the system of ODEs \eqref{FundamentalSystem} are known to perform poorly in certain cases. These include for example, very large system sizes, or in the presence of highly oscillatory coefficients. Consequently, in the last decades, novel techniques have been sought and proposed, many of which are based on the Magnus series; see, for instance, \cite{HocLub99,BudAl99,IseAl2000,Ise02,Ise04,DegSch06,Cohetal2006,Blanes2009,BadEtAl16,Bla17}.
However, for large matrices, these methods are known to be highly consuming in resources.
This motivates the research of novel approaches in particular for large-scale problems.
Here the guaranteed convergence of the $\ast$-Lanczos algorithm in a finite number of iterations, the sequence of approximations it produces, its ability to exploit matrix sparsity and its relations with numerically well studied Lanczos procedures are all promising elements which justify further works on concrete numerical implementations.
 More precise theoretical results about the overall approximation quality and further issues on numerical applications of the present algorithm are beyond the scope of this work. They will appear together with a practical implementation of the algorithm in a future contribution. 
 
\section{Conclusion}\label{sec:conc}
In this work, we constructed the $\ast$-Lanczos algorithm as a biorthogonalization process of Krylov subspaces composed of distributions with respect to the $\ast$-product, a convolution-like product.
 The algorithm relies on a non-commutative operation and is analogous in spirit to the non-Hermitian Lanczos algorithm.
 The $\ast$-Lanczos algorithm can express the element of a time-ordered exponential of size $N \times N$ by the path-sum continued fraction \eqref{PSresult}. To our knowledge, such an expression is the only one composed of $\mathcal{O}(N)$ scalar integro-differential equations.
 Such a time-ordered exponential approximation relies on the matching
  moment property proved in this paper.
 
 The overall approach generates a controllable sequence of time-ordered exponential approximations, offers an innovative perspective of the connection between numerical linear algebra and differential calculus, and opens the door to efficient numerical algorithms for large-scale computations.

\backmatter

\bmhead{Acknowledgments}
We thank Francesca Arrigo, Des Higham, Jennifer Pestana, and Francesco Tudisco for their invitation to the  University of Strathclyde, without which this work would not have come to fruition. 
The first author was supported in part by 2019 Alcohol project ANR-19-CE40-0006 and 2020 Magica project ANR-20-CE29-0007. The second author was supported by Charles University Research programs No. PRIMUS/21/SCI/009 and UNCE/SCI/023.

\section*{Declarations}
\begin{itemize}
\item On behalf of all authors, the corresponding author states that there is no conflict of interest. 
% 
% \item Funding
% \item Conflict of interest/Competing interests (check journal-specific guidelines for which heading to use)
% \item Ethics approval 
% \item Consent to participate
% \item Consent for publication
% \item Availability of data and materials
% \item Code availability 
% \item Authors' contributions
\end{itemize}

\bibliography{Gauss_quadrature_time_exp}

\end{document}